\lstdefinelanguage{GAP}{%
  morekeywords={%
    Assert,Info,IsBound,QUIT,%
    TryNextMethod,Unbind,and,break,%
    continue,do,elif,%
    else,end,false,fi,for,%
    function,if,in,local,%
    mod,not,od,or,%
    quit,rec,repeat,return,%
    then,true,until,while%
  },%
  sensitive,%
  morecomment=[l]\#,%
  morestring=[b]",%
  morestring=[b]',%
}[keywords,comments,strings]
\newtheorem{thm}{Theorem}[section]
\newtheorem{cor}[thm]{Corollary}
\newtheorem{lem}[thm]{Lemma}
\newtheorem{prop}[thm]{Proposition}
\newtheorem{example}[thm]{Example}
\newcommand{\abs}[1]{\left\vert#1\right\vert}
\newcommand{\cc}[1]{\textcolor{red}{{#1}}}
\def\pv#1{\ensuremath{\mathsf{#1}}}
\newcommand{\J}{\mathrel{\mathscr J}} 
\newcommand{\R}{\mathrel{\mathscr R}} 
\newcommand{\eL}{\mathrel{\mathscr L}} 
\newcommand{\HH}{\mathrel{\mathscr H}}
\newcommand{\mylabel}[2]{#2\def\@currentlabel{#2}\label{#1}}
\begin{document}
\title[The determinant of semigroups of the pseudovariety $\pv{ECOM}$]{The determinant of semigroups of the pseudovariety $\ensuremath{\mathsf{ECOM}}$}
\author{M.H. Shahzamanian}
\address{M.H. Shahzamanian\\ CMUP, Departamento de Matemática, Faculdade de Ciências, Universidade do Porto, Rua do Campo Alegre s/n, 4169--007 Porto (Portugal).}
\email{m.h.shahzamanian@fc.up.pt}
\thanks{ 2010 Mathematics Subject Classification. Primary 20M25, 16L60, 16S36.\\
Keywords and phrases: Frobenius algebra, semigroup determinant, paratrophic determinant, semigroup algebra.}

\begin{abstract}
The purpose of this paper is to compute the non-zero semigroup determinant of the class of finite semigroups in which every two idempotents commute.
This class strictly contains the class of finite semigroups that have central idempotents and the class of finite inverse semigroups.
This computation holds significance in the context of the extension of the MacWilliams theorem for codes over semigroup algebras.
\end{abstract}
\maketitle


\section{Introduction}

In the 1880s, Dedekind introduced the concept of the group determinant of finite groups and with Frobenius, began to study it in depth. At the same time, Smith also examined this concept, but in a different way, as outlined in \cite{Smith}. This study involved the investigation of the determinant of a $G \times G$ matrix, where the entry at the $(g, h)$ position is $x_{gh}$, with $G$ being a finite group and the $x_k$ are variables, for all
$k$ in $G$. Additionally, the study has been expanded to include finite semigroups with various research objectives \cite{Wilf, Lindstr, Wood}.
An application of the semigroup determinant for finite semigroups is the extension of the MacWilliams theorem for codes over a finite field to chain rings. 
Linear codes over a finite Frobenius ring have the extension property (see \cite{Wood-Duality}).
The non-zero semigroup determinant is an essential component in this application.
It is only non-zero when $\mathbb{C}S$ is a Frobenius algebra, which also means that it is unital.
This fact is demonstrated by Theorem 2.1 in \cite{Ste-Fac-det} or Proposition 18 in Chapter 16 of \cite{Okn}. 

In the paper~\cite{Ste-Fac-det} by Steinberg, he provides a factorization of the semigroup determinant of commutative semigroups.
The semigroup determinant is either zero or it factors into linear polynomials. Steinberg describes the factors and their multiplicities explicitly.
This work was a continuation of previous studies on commutative semigroups with Frobenius semigroup algebras by Ponizovski\u{\i}~\cite{Ponizovski} and Wenger~\cite{Wenger}.
Steinberg also showed that the semigroup determinant of an inverse semigroup can be computed as the semigroup determinant of a finite groupoid. 

The aim of the current paper is to understand under what conditions the determinant of a semigroup lying in the pseudovariety $\pv{ECOM}$ is non-zero and to study its factorization.
Essentially, it wishes to develop the ideas in Steinberg’s paper~\cite{Ste-Fac-det}. 
In~\cite{Ste-Fac-det}, the determinant of semigroups with central idempotents has been examined for the purpose of providing a factorization of commutative semigroups. 
The pseudovariety $\pv{ECOM}$ is, by a celebrated result of Ash, precisely the pseudovariety generated by finite inverse semigroups.
This is a larger class than that of the semigroups with central idempotents and also of inverse semigroups discussed in~\cite{Ste-Fac-det}. 
Then, this fact makes it a natural object of study.

We defines a partial order relation for this class of finite semigroups. This relation extends the natural partial ordering of the idempotents within the semigroup. This partial order relation is crucial for examining the determinant of these semigroups.
We then identify semigroups in this class with a non-zero determinant, studying their factorizations.
Our identification is more specific for this class of semigroups.

The paper is organized as follows. We begin with a preliminary section on semigroups and determinant of a semigroup. 
We present a partial order relation on the semigroups of the pseudovariety $\pv{ECOM}$ and investigate their properties.
We then proceed to compute the determinant of finite semigroups within this pseudovariety. 
To demonstrate the method, several examples are provided, and their calculations are performed using programs developed in C{\fontseries{b}\selectfont\#}. These examples are discussed in an appendix at the end of the paper.


\section{Preliminaries}

\subsection{Semigroups}

For standard notation and terminology relating to semigroups, we refer the reader to~\cite[Chap. 5]{Alm}, \cite[Chaps. 1-3]{Cli-Pre} and~\cite[Appendix A]{Rho-Ste}.

Let $S$ a finite semigroup. 
Let $a,b\in S$. 
We say that $a\R b$ if $aS^{1} = bS^{1}$, $a\eL b$ if $S^{1}a = S^{1}b$ and $a\HH b$ if $a\R b$ and $a\eL b$. 
Also, we say that $a\J b$, if $S^{1}aS^{1} = S^{1}bS^{1}$. 
Please observe that the symbol 1 in notation $S^1$ does not denote any specific element of $S$. 
If $S$ possesses an identity element, then $S^1= S$. However, if $S$ lacks an identity element, $S^1 =S \cup 1$, forming a semigroup with 1 as its identity element.
The relations $\R,\eL$, $\HH$ and $\J$ are Green’s relations, named after Green~\cite{Gre}.
We call $R_a,L_a,H_a$ and $J_a$, respectively, the $\R,\eL,\HH$ and $\J$-class containing $a$.
Also, we have $a \widetilde{\eL} b$ if and only if $a$ and $b$ have the same set of idempotent right identities, 
that is, $ae = a$ if and only if $be = b$ in the sense Fountain et al. \cite{Fou-Gom-Gou}. The relation $\widetilde{\R}$ is defined dually, 
and $\widetilde{\HH}=\widetilde{\eL}\wedge\widetilde{\R}$.
We write $\widetilde{L}_s$, $\widetilde{R}_s$ and $\widetilde{H}_s$ for the equivalence classes of $s$ of these relations, respectively.
For further results regarding this object see \cite{Lawson-Sem-Cat}.

An element $e$ of $S$ is called \emph{idempotent} if $e^2 = e$. 
The set of all idempotents of $S$ is denoted by $E(S)$. 
An idempotent $e$ of $S$ is the identity of the monoid $eSe$. 
The group of units $G_e$ of $eSe$ is called the maximal subgroup of $S$ at $e$. 

A \emph{left ideal} of a semigroup $S$ is a non-empty subset $A$ of $S$ such that $SA \subseteq A$. 
A \emph{right ideal} of $S$ is defined dually, with the condition $AS \subseteq A$. 
An \emph{ideal} of $S$ is a subset of $S$ that is both a left and a right ideal.
The semigroup $S$ is \emph{inverse} if, for all $s \in S$, there is a unique element $s^{-1} \in S$ such that $ss^{-1}s =s$ and $s^{-1}ss^{-1}=s^{-1}$.
For an elements $s\in S$, $s^{\omega}$ is the limit of the sequence $(s^{n!})_n$. 

A \emph{pseudovariety} of semigroups is a class of finite semigroups that is closed under taking subsemigroups, homomorphic images, and finite direct products. The pseudovariety $\pv{S}$ consists of all finite semigroups, while the pseudovariety $\pv{G}$ is the class of all finite groups, $\pv{Sl}$ and $\pv{Com}$ are the pseudovarieties of all finite, respectively, semilattices and commutative semigroups.
The operator $\pv{E}$ associates a pseudovariety $\pv{V}$ to the class of finite semigroups such that the subsemigroup generated by the idempotents of the semigroup belongs to $\pv{V}$, which can be written as
$$\mathsf{EV} = \{S \in \mathsf{S} \mid \langle E(S)\rangle \in \mathsf{V}\}.$$
If a finite semigroup $S$ is a member of $\mathsf{ECom}$, then the subsemigroup generated by the idempotents of $S$ is equal to the set of idempotents of $S$. Therefore, the pseudovariety $\pv{ESl}$ is equal to the pseudovariety $\pv{ECom}$. By a celebrated result of Ash~\cite{Ash}, the pseudovariety generated by finite inverse semigroups is precisely the pseudovariety $\mathsf{ECom}$.

\subsection{Incidence Algebras and M{\"o}bius Functions}

Let $(P,\leq)$ be a finite partially ordered set (poset). 
The \emph{incidence algebra} of $P$ over $\mathbb{C}$, which we denote $\mathbb{C}\llbracket P\rrbracket$, is the algebra of all functions $f\colon P \times P \rightarrow \mathbb{C}$ such that
$$f(x, y) \neq 0 \Rightarrow x \leq y$$
equipped with the convolution product
$$(f \ast g)(x, y) = \sum\limits_{x\leq z\leq y}f(x, z)g(z, y).$$
The \emph{convolution identity} is the delta function $\delta$ given by
\begin{equation*}
\delta(x,y)=
\begin{cases}
1 & \text{if}\ x=y\\
0 & \text{otherwise.}
\end{cases}
\end{equation*}
The \emph{zeta function}, denoted as $\zeta_P$, of the poset $P$ is an element of $\mathbb{C}\llbracket P\rrbracket$ 
given by
\begin{equation*}
\zeta_P(x,y)=
\begin{cases}
1 & \text{if}\ x \leq y\\
0 & \text{otherwise.}
\end{cases}
\end{equation*}
The function $\zeta_P$ is upper triangular with ones on the diagonal with respect to any linear order extending $P$. 
Therefore, $\zeta_P$ has an inverse over the integers called the \emph{M{\"o}bius function}, represented by $\mu_P$. 
In instances where the poset $P$ is clear from context, the subscript $P$ will be omitted.

Let $f$ be a function from $P$ to $\mathbb{C}$. 
By Applying M\"{o}bius inversion, 
if $g$ is the function from $P$ to $\mathbb{C}$ given by $g(x) = \sum\limits_{y\leq x}f(y)$ then $f(x)= \sum\limits_{y \leq x} \mu_P(y,x)g(y)$, for every $x \in P$.

We recommend that the reader refer to \cite{IncidenceAlgebras} for further information on this section.

\subsection{Determinant of a semigroup}

For standard notation and terminology relating to finite dimensional algebras, the reader is referred to \cite{Assem-Ibrahim} and \cite{Benson}.

A based algebra is a finite dimensional complex algebra $A$ with a distinguished basis $B$.
We often refer to the pair of the algebra and its basis as $(A, B)$.
The multiplication in the algebra is determined by its structure constants with respect to the basis $B$ defined by the equations
$$bb'\ =\sum\limits_{b''\in B}c_{b'',b,b'}b''$$
where $b, b'\in B$ and $c_{b'',b,b'}\in \mathbb{C}$.
Let $X_B=\{x_b\mid b \in B\}$ be a set of variables in bijection with $B$. 
These structure constants can be represented in a matrix called the Cayley table, which is a $B \times B$ matrix with elements from the polynomial ring $\mathbb{C}[X_B]$.
It is defined as a $B\times B$ matrix over $\mathbb{C}[X_B]$ with entries given by
$$C(A,B)_{b,b'} =\sum\limits_{b''\in B}c_{b'',b,b'}x_{b''}.$$
The determinant of this matrix, denoted by $\theta_{(A,B)}(X_B)$, is either identically zero or a homogeneous polynomial of degree $\abs{B}$.

Let $S$ be a finite semigroup.
The semigroup $\mathbb{C}$-algebra $\mathbb{C}S$ consists of all the formal sums $\sum\limits_{s\in S} \lambda_s s$, where $\lambda_s \in \mathbb{C}$ and
$s \in S$, with the multiplication defined by the formula
$$(\sum\limits_{s\in S} \lambda_s s)\cdot (\sum\limits_{t\in S} \mu_t t) =\sum\limits_{u=st\in S} \lambda_s\mu_t u.$$
Note that $\mathbb{C}S$ is a finite dimensional $\mathbb{C}$-algebra with basis $S$.
If $A =\mathbb{C}S$ and $B=S$, then the Cayley table of $C(S)=C(\mathbb{C}S,S)$ is the $S\times S$ matrix over $\mathbb{C}[X_S]$ with
$C(S)_{s,s'} =x_{ss'}$ where $X_S=\{x_s\mid s \in S\}$ is a set of variables in bijection with $S$.
We denote the determinant $C(\mathbb{C}S,S)$ by $\theta_S(X_S)$
and call it the (Dedekind-Frobenius) \emph{semigroup determinant} of $S$.
If the semigroup $S$ is fixed, we often write $X$ instead of $X_S$.
For more information on this topic, the reader is referred to \cite{Frobenius1903theorie}, \cite[Chapter16]{Okn} and \cite{Ste-Fac-det}.

The \emph{contracted semigroup algebra} of a semigroup $S$ with a zero element 0 on the complex numbers is defined as $\mathbb{C}_0S=\mathbb{C}S/\mathbb{C}0$; note that $\mathbb{C}0$ is a one-dimensional two-sided ideal. 
This algebra can be thought of as having a basis consisting of the non-zero elements of $S$ and having multiplication that extends that of $S$, but with the zero of the semigroup being identified with the zero of the algebra.
The contracted semigroup determinant of $S$, denoted by $\widetilde{\theta}_S$, is the determinant of $\widetilde{C}(S) =C(\mathbb{C}_0S, S\setminus\{0\})$, where $\widetilde{C}(S)_{s,t}$ is equal to $x_{st}$ if $st\neq 0$ and 0 otherwise. Let $\widetilde{X}=X_{S\setminus \{0\}}$ if $S$ is understood.

Steinberg in~\cite{Ste-Fac-det} provides a theorem regarding the behavior of the paratrophic determinant under isomorphism (or change of basis). 
We mention it as follows.

\begin{thm}\label{ThetaCal}
Let $(A, B)$ and $(A', B')$ be based algebras and let $f\colon A \rightarrow A'$ be a $\mathbb{C}$-algebra homomorphism. 
Let $P$ be the $B'\times B$ matrix of $f$ with respect to the bases $B$ and $B'$. 
Let $\widetilde{f}\colon \mathbb{C}[X_B] \rightarrow \mathbb{C}[X_{B'}]$ be the homomorphism $x_b \mapsto \sum\limits_{b' \in B'}P_{b',b}x_{b'}$ induced by $f$ (note that $\widetilde{f}(x_b)$ is a linear homogeneous polynomial). 
Then $\widetilde{f}(C(A, B)) =P^TC(A', B')P$. 
Therefore, if $f$ is an isomorphism, $\theta_{(A,B)}=\det(P)^2\widetilde{f}^{-1}(\theta_{(A',B')})$.
\end{thm}

According to Proposition 2.7 in~\cite{Ste-Fac-det} (the idea mentioned in~\cite{Wood}), there is a connection between the contracted semigroup determinant and the semigroup determinant of a semigroup $S$ with a zero element.
There is a $\mathbb{C}$-algebra isomorphism 
between the $\mathbb{C}$-algebra $\mathbb{C}S$ and the product algebra $\mathbb{C}_0S\times \mathbb{C}0$, which sends $s \in S$ to $(s,0)$.
Put $y_s=x_s-x_0$ for $s \neq 0$ and let $Y=\{y_s\mid s \in S\setminus \{0\}\}$. 
Then $\theta_S(X) =x_0\widetilde{\theta}_S(Y)$. 
Therefore, $\widetilde{\theta}_S(\widetilde{X})$ can be obtained from $\theta_S(X)/ x_0$ by replacing $x_0$ with 0.


\section{The relation $\ll$}\label{Relationll}
There are necessary conditions are given for a semigroup $S$ to have a non-zero $\theta_S(X)$, as stated in \cite{Ste-Fac-det}.
If $\theta_S(X)$ is not equal to 0, then the semigroup algebra $\mathbb{C}S$ is a unital algebra, according to Theorem 2.1 in~\cite{Ste-Fac-det}.
Furthermore, if $\theta_S(X)$ is not equal to 0, then $S$ satisfies the condition described in ~\ref{itm:star} below, as stated in Corollary 2.4 in~\cite{Ste-Fac-det}:
\begin{enumerate}
\item[\mylabel{itm:star}{($\star$)}] For each element $s$ in $S$, the number of elements of $S$ fixed by $s$ under left and right multiplication is the same.
\end{enumerate}

Let $S$ be a semigroup.
We define the functions $\varphi^{\ast}$ and $\varphi^{+}$ from $S$ to the power set of $E(S)$ 
as follows:
$$\varphi^{\ast}(s)=\{e\in E(S)\mid se = s\}\ \text{and} \ \varphi^{+}(s)=\{e\in E(S)\mid es = s\}.$$

\begin{lem}\label{phiESNeqEmptyset}
If $S$ is finite and $\mathbb{C}S$ is a unital algebra, then the subsets $\varphi^{\ast}(s)$ and $\varphi^{+}(s)$ are non-empty, for every $s\in S$.
\end{lem}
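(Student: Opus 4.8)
The plan is to use the hypothesis that $\mathbb{C}S$ is unital to force the existence of idempotents fixing $s$ on each side. Let $1 = \sum_{t \in S} \lambda_t t$ be the identity of $\mathbb{C}S$, and fix $s \in S$. From $s \cdot 1 = s$ we obtain, after collecting terms, that $\sum_{t} \lambda_t (st) = s$ in $\mathbb{C}S$; comparing coefficients of the basis element $s$ gives $\sum_{t : st = s} \lambda_t = 1$, so in particular the set $T_s = \{t \in S \mid st = s\}$ is non-empty. Note $T_s$ is a subsemigroup of $S$: if $st = s$ and $st' = s$ then $s(tt') = (st)t' = st' = s$. Being a finite non-empty semigroup, $T_s$ contains an idempotent $e$, and this $e$ satisfies $se = s$, i.e.\ $e \in \varphi^r(s)$. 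The argument for $\varphi^l(s)$ is the mirror image, using $1 \cdot s = s$ and the set $\{t \in S \mid ts = s\}$.

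In more detail, the key steps in order are: (i) write down the identity element $1 = \sum_{t \in S}\lambda_t t$ of $\mathbb{C}S$, which exists by hypothesis; (ii) expand $s \cdot 1 = s$ using the multiplication rule of $\mathbb{C}S$, group the sum by the value of the product $st$, and match the coefficient of $s$ on both sides to conclude $\sum_{t \in S,\ st = s}\lambda_t = 1$, hence $T_s := \{t \in S \mid st = s\} \neq \emptyset$; (iii) observe $T_s$ is closed under the multiplication of $S$ (an immediate associativity computation), so $T_s$ is a finite subsemigroup; (iv) invoke the standard fact that every finite semigroup contains an idempotent — indeed, for any $t \in T_s$, some power $t^k$ is idempotent — to get $e = e^2 \in T_s$, which means $se = s$, so $\varphi^r(s) \neq \emptyset$; (v) repeat symmetrically with $1 \cdot s = s$ for $\varphi^l(s)$.

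I do not anticipate a serious obstacle here; the statement is essentially a bookkeeping consequence of the existence of a two-sided identity plus the fact that finite semigroups have idempotents. The one point that deserves a sentence of care is the coefficient comparison in step (ii): after rewriting $s\cdot 1 = \sum_{u \in S}\bigl(\sum_{t:\,st=u}\lambda_t\bigr)u$, equating with $s = 1 \cdot s$ (coefficient $1$ on $s$, $0$ elsewhere) and reading off the coefficient of the basis vector $s$ yields exactly $\sum_{t:\,st=s}\lambda_t = 1$, which is a non-trivial linear relation and in particular certifies that the index set of the sum is non-empty. Everything else is routine, so the write-up should be short.
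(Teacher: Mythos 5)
Your proof is correct, and its first half coincides with the paper's: both extract from the identity element $1=\sum_t\lambda_t t$ the fact that some $t$ with $st=s$ exists. Where you diverge is in how you upgrade such a $t$ to an idempotent. You observe that $T_s=\{t\in S\mid st=s\}$ is a nonempty finite subsemigroup and take an idempotent power $t^k\in T_s$ of a single element — one application of the unital hypothesis, then a standard fact about finite semigroups. The paper instead argues by contradiction: assuming $\varphi^r(s)=\emptyset$, it reapplies the unital hypothesis at each stage to build a chain $s_{i_1},s_{i_2},\dots$ with $s_{i_j}s_{i_{j+1}}=s_{i_j}$ and $\varphi^r(s_{i_j})=\emptyset$, then uses finiteness to find a repetition $s_{i_{j_1}}=s_{i_{j_2}}$ and telescopes the products to force $s_{i_{j_1}}^2=s_{i_{j_1}}$, contradicting $s_{i_{j_1}}\notin E(S)$. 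The two arguments use the same ingredients (the support of the identity plus finiteness), but yours is more economical: the subsemigroup observation lets you invoke the idempotent-power lemma directly rather than rediscovering it through the chain-and-cycle construction, and it avoids the contradiction framing entirely. The coefficient comparison you flag in step (ii) is exactly the point the paper also relies on, and you state it more carefully than the paper does.
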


\begin{proof}
Let $S=\{s_1,\ldots,s_n\}$ and $s\in S$.
Let $1_S=c_1s_1+\cdots+c_ns_n$, for some $c_1,\ldots,c_n\in \mathbb{C}$, be the identity of the algebra $\mathbb{C}S$.
Since $s1_S=s$, there exists an integer $1\leq i\leq n$ such that $c_i\neq 0$ and $ss_i=s$. 
Then, we have $ss_i^{\omega}=s$.
It follows that $s_i^{\omega}\in\varphi^{\ast}(s)$ and, thus, we get that $\varphi^{\ast}(s)\neq\emptyset$

Similarly, we have $\varphi^{+}(s)\neq\emptyset$.
\end{proof}

For the rest of this section, we will assume that $S$ belongs to the pseudovariety $\mathsf{ECom}$ and $S$ satisfies the following conditions designated as~\mylabel{itm:star2}{($\star\star$)}:
\begin{enumerate}
\item $S$ satisfies Condition~\ref{itm:star};
\item the semigroup algebra $\mathbb{C}S$ is a unital algebra.
\end{enumerate}

The elements of $E(S)$ commute, so we have $E(S)=\langle E(S)\rangle$. 
Hence, $E(S)$ is a meet semilattice with respect to the ordering $e \leq f$, if $ef = e$.
By Lemma~\ref{phiESNeqEmptyset}, the subsets $\varphi^{\ast}(s)$ and $\varphi^{+}(s)$ are non-empty.
Since $E(S)$ is a meet semilattice, $\varphi^{\ast}(s)$ and $\varphi^{+}(s)$ have unique minimum elements respect to the ordering $\leq$ which are denoted by $s^{\ast}$ and $s^{+}$, respectively.

Note that, $s^{\ast} = t^{\ast}$ if and only if $\varphi^{\ast}(s)=\varphi^{\ast}(t)$, and $s^{+} = t^{+}$ if and only if $\varphi^{+}(s)=\varphi^{+}(t)$. Then, the equivalence relations $\widetilde{\eL}$, $\widetilde{\R}$
and $\widetilde{\HH}$ can be described as follows:
\begin{enumerate}
\item $s \widetilde{\eL} t$ if $s^{\ast} = t^{\ast}$;
\item $s \widetilde{\R} t$ if $s^{+} = t^{+}$;
\item $s \widetilde{\HH} t$ if $s^{\ast} = t^{\ast}$ and $s^{+} = t^{+}$.
\end{enumerate}

Let $e$ be an idempotent of $\widetilde{L}_s$. We have $e = e^{\ast} = s^{\ast}$. Then $s^{\ast}$ is the unique idempotent in $\widetilde{L}_s$.
Also, $s^{+}$ is the unique idempotent in $\widetilde{R}_s$.
Note that, it is not necessarily true that for an idempotent $s\in S$, $s^{\ast}=s^{+}$ (as demonstrated in Example~\ref{Exa1}).

Let $s$ and $t$ be elements of $S$. Define $s \leq^{\ast} t$ if $s = ts^{\ast}$ and $s \leq^{+} t$ if $s = s^{+}t$.
Also, we define $s \ll t$, if $s = s^{+}ts^{\ast}$.
Since the elements within the set $E(S)$ commute, the relations $\leq^{\ast}$, $\leq^{+}$ and $\ll$ are partial orders extending the ordering $\leq$.
Moreover, the relations $\leq^{\ast}$ and $\leq^{+}$ are commute. 

\begin{prop}\label{leqES}
The following conditions hold:
\begin{enumerate}
\item $s \leq^{\ast} t$ if and only if $s = te$ for some $e\in E(S)$, and $s \leq^{+} t$ if and only if $s = et$ for some $e \in E(S)$.
\item if $s \leq^{\ast} t$ or $s \leq^{+} t$, then $s^{+}\leq t^{+}$ and $s^{\ast}\leq t^{\ast}$. 
\item $s \ll t$ if and only if there exists an element $u$ in $S$
such that $s \leq^{\ast} u \leq^{+} t$ or $s \leq^{+} u \leq^{\ast} t$.
\item if $s \ll t$, then $s^{+}\leq t^{+}$ and $s^{\ast}\leq t^{\ast}$.
\end{enumerate}
\end{prop}

\begin{proof}
(1) 
Suppose that there exists an idempotent $e \in E(S)$ such that $s = te$.
Hence, we have $se = s$ and, thus $s^{\ast} \leq e$. 
Therefore, we have $s = ss^{\ast} = tes^{\ast} = ts^{\ast}$. Hence, $s \leq^{\ast} t$. The converse is true as well, since $s^{\ast} \in E(S)$.

Similarly, the proposition holds for the relation $\leq^{+}$.

(2)
First, suppose that $s \leq^{\ast} t$.
Then, we have $s = ts^{\ast}$. Hence, if $e\in \varphi^{+}(t)$ then $e\in \varphi^{+}(s)$ and, thus, $s^{+}\leq t^{+}$.
Also, if $e\in \varphi^{\ast}(t)$ then $se = ts^{\ast}e = tes^{\ast} = ts^{\ast} = s$. Thus, $s^{\ast}\leq t^{\ast}$.

Similarly, if $s \leq^{+} t$, then $s^{+}\leq t^{+}$ and $s^{\ast}\leq t^{\ast}$.

(3)
First, suppose that there exists an elements $u$ in $S$ such that 
$s \leq^{\ast} u \leq^{+} t$ or $s \leq^{+} u \leq^{\ast} t$. By symmetry, we may assume that $s \leq^{\ast} u \leq^{+} t$.
Since $s \leq^{\ast} u$, we have $s = us^{\ast}$ and by part (2), we get that $s^{+}\leq u^{+}$.
As $u \leq^{+} t$, we have $u = u^{+}t$ and, thus, $s = u^{+}ts^{\ast}$. Therefore, it follows that $s = u^{+}ts^{\ast} = s^{+}u^{+}ts^{\ast} = s^{+}ts^{\ast}$. If follows that $s \ll t$.

Now, suppose that $s = s^{+}ts^{\ast}$. 
Hence, we have $s \leq^{\ast} s^{+}t \leq^{+} t$ and $s \leq^{+} ts^{\ast} \leq^{\ast} t$.
The result follows.

(4)
Since $s \ll t$, we have $s = s^{+}ts^{\ast}$. Now, as $t=tt^{\ast}$, we have $st^{\ast} = s^{+}ts^{\ast}t^{\ast} = s^{+}tt^{\ast}s^{\ast} = s^{+}ts^{\ast} = s$. Hence, $s^{\ast}\leq t^{\ast}$. 
Similarly, we have $s^{+}\leq t^{+}$.
%
%
%
%
\end{proof}

If $s' \ll s$ and $t' \ll t$, it does not mean that $s't' \ll st$. The semigroup's determinant can be either zero or non-zero, as shown in  Examples~\ref{Exa2} and \ref{Exa3}.

For an idempotent $e \in E(S)$, we define the subsets $I^{\ast}_e=\{s \mid s^{\ast} < e \}$ and $I^{+}_e=\{s \mid s^{+} < e \}$. 
Note that if $s\in I^{\ast}_e$, then we have $e\in \varphi^{\ast}(s)$ and, if $s\in I^{+}_e$, then we have $e\in \varphi^{+}(s)$.
We have $Se=\{s\in S\mid se=s\}$ and $eS=\{s\in S\mid es=s\}$.
The subsets $Se$ and $I^{\ast}_e$ are unions of $\widetilde{\eL}$-classes, with $Se \setminus \widetilde{L}_e = I^{\ast}_e$.
Also, the subsets $eS$ and $I^{+}_e$ are unions of $\widetilde{\R}$-classes, with $eS \setminus \widetilde{R}_e = I^{+}_e$.\\

\begin{prop}\label{Ie}
The following properties hold:
\begin{enumerate}
\item the subsets $I^{\ast}_e$ and $I^{+}_e$ are left and right ideal of $S$, respectively.
\item we have $I^{\ast}_e=\emptyset$ if and only if $I^{+}_e=\emptyset$.
\item we have $I^{\ast}_e=\emptyset$ if and only if $Se =L_e=\widetilde{L}_e$ where $L_e$ is the $\eL$-class of the element $e$ in $S$.
\item we have $I^{+}_e=\emptyset$ if and only if $eS =R_e=\widetilde{R}_e$ where $R_e$ is the $\R$-class of the element $e$ in $S$.
\end{enumerate}
\end{prop}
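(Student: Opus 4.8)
The plan is to treat the four items largely independently, leaning on the two disjoint decompositions $Se=I^r_e\bigcupdot\widetilde{\eL}_e$ and $eS=I^l_e\bigcupdot\widetilde{\R}_e$ recorded just before the statement, together with the observation that $I^r_e=\emptyset$ is exactly the condition that no idempotent of $S$ lies strictly below $e$, i.e.\ that $e$ is a minimal idempotent (if $f\in E(S)$ and $f<e$ then $f=f^{{+}_r}$ witnesses $f\in I^r_e$, and conversely $s\in I^r_e$ gives the idempotent $s^{{+}_r}<e$).

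For (1), I would note that $s\in I^r_e$ gives $ss^{{+}_r}=s$, so for any $t\in S$ we have $(ts)s^{{+}_r}=ts$; hence $s^{{+}_r}\in\varphi^r(ts)$ and $(ts)^{{+}_r}\le s^{{+}_r}<e$, so $ts\in I^r_e$ and $I^r_e$ is a left ideal. The left--right dual, using $s^{{+}_l}s=s$, shows $I^l_e$ is a right ideal. For (2), an idempotent $f$ satisfies $f^{{+}_r}=f=f^{{+}_l}$, so the idempotent sets of $I^r_e$ and of $I^l_e$ both equal $\{f\in E(S)\mid f<e\}$; combined with $I^r_e=\bigcupdot_{f\in E(I^r_e)}\widetilde{\eL}_f$ and its dual, this makes $I^r_e=\emptyset$, $E(I^r_e)=\emptyset$, $E(I^l_e)=\emptyset$ and $I^l_e=\emptyset$ all equivalent.

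For (3), the decomposition of $Se$ shows $I^r_e=\emptyset$ is equivalent to $Se=\widetilde{\eL}_e$, so it suffices to prove $I^r_e=\emptyset\Rightarrow Se=L_e$ (the reverse inclusion $L_e\subseteq Se$ being automatic, since $a\eL e$ forces $a\in S^1e=Se$). Assume $I^r_e=\emptyset$. Then $e$ is a minimal idempotent, and since idempotents commute $E(eSe)=\{f\in E(S)\mid f\le e\}=\{e\}$; a finite monoid with a unique idempotent is a group, so $eSe=G_e$. Given $a\in Se$ we have $ae=a$, hence $ea=eae\in eSe=G_e$, so there is $g\in G_e$ with $g(ea)=e$; since $ge=g$ this gives $ga=(ge)a=g(ea)=e$, so $e\in S^1a$. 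Together with $a\in S^1e$ this yields $S^1a=S^1e$, i.e.\ $a\eL e$. Thus $Se\subseteq L_e$ and $Se=L_e=\widetilde{\eL}_e$; conversely $Se=L_e=\widetilde{\eL}_e$ forces $Se=\widetilde{\eL}_e$, hence $I^r_e=\emptyset$. Item (4) is the left--right dual of (3).

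The only step with real content is the implication in (3)--(4): passing from "$I^r_e=\emptyset$" to "$eSe$ is a group", and then using invertibility in $G_e$ to upgrade membership in the principal left ideal $Se$ to genuine $\eL$-equivalence with $e$. Everything else is routine bookkeeping with the maps $s\mapsto s^{{+}_r}$, $s\mapsto s^{{+}_l}$, the definitions of $\varphi^r,\varphi^l$, and the decompositions already established.
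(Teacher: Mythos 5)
Your proposal is correct. Parts (1) and (2) coincide with the paper's argument: the same one-line computation $(ts)s^{{+}_r}=ts$ gives the ideal property, and the observation that $E(I^r_e)=E(I^l_e)=\{f\in E(S)\mid f<e\}$ (together with the fact that $s\in I^r_e$ always produces the idempotent $s^{{+}_r}\in I^r_e$) gives (2). Where you genuinely diverge is in (3)--(4). The paper argues by contraposition on the $L_e$ part: if $a=se\in Se\setminus L_e$, it takes the idempotent power $(se)^{\omega}$, checks $(se)^{\omega}\leq e$ and $(se)^{\omega}\neq e$, and exhibits $(se)^{\omega}\in I^r_e$; conversely an idempotent $f\in I^r_e$ lies in $Se\setminus L_e$. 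You instead prove the forward implication directly: $I^r_e=\emptyset$ makes $e$ minimal in $E(S)$, hence $E(eSe)=\{e\}$, hence $eSe=G_e$ is a group by finiteness, and invertibility of $ea$ in $G_e$ upgrades $a\in Se$ to $a\eL e$. Both are sound; the paper's route is more elementary (it never needs the local monoid $eSe$ or the fact that a finite monoid with a unique idempotent is a group), while yours is more structural and makes explicit the remark the paper only states after the proposition, namely that $I^r_e=\emptyset$ if and only if $eSe=G_e$. Your handling of the $\widetilde{\eL}_e$ part via the decomposition $Se=I^r_e\bigcupdot\widetilde{\eL}_e$ is essentially the paper's as well.
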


\begin{proof}
(1)
Let $s \in I^{\ast}_e$ and $t \in S$. Then, we have $tss^{\ast} =ts$ and thus $(ts)^{\ast}\leq s^{\ast}<e$. 
Similarly, $I^{+}_e$ is an right ideal of $S$.

(2)
If $I^{\ast}_e\neq\emptyset$ then there exists an idempotent $f\in I^{\ast}_e$. Then, we have $f<e$. It follows that $I^{+}_e\neq\emptyset$.
Similarly, if $I^{+}_e\neq\emptyset$, then we have $I^{\ast}_e\neq\emptyset$.

(3)
If $Se\neq L_e$, then there exists an element $s\in S$ such that $se\not\in L_e$ and, thus, $(se)^{\omega}\neq e$. 
Hence, we have $(se)^{\omega}< e$. 
Since $((se)^{\omega})^{\ast}= (se)^{\omega}$, we have $(se)^{\omega}\in I^{\ast}_e$. 
Then, $I^{\ast}_e\neq \emptyset$.
Also, if $I^{\ast}_e\neq \emptyset$, then there exists an idempotent $f\in I^{\ast}_e$. 
So, we have $f < e$ and, thus, $fe=f$. 
Now, as $f\neq e$, we have $f\in Se\setminus L_e$. 
Then, $I^{\ast}_e= \emptyset$ if and only if, $Se =L_e$.

Now, we prove that $I^{\ast}_e=\emptyset$ if and only if $Se=\widetilde{L}_e$.
Suppose that $I^{\ast}_e\neq\emptyset$. Then, again there exists an idempotent $f$ in $I^{\ast}_e$ and, thus, we have $f<e$. 
Then $f\in Se\setminus\widetilde{L}_e$. 
Now, suppose that $Se\setminus\widetilde{L}_e \neq \emptyset$. 
Let $se\in Se\setminus\widetilde{L}_e$. 
Then, we have $(se)^{\ast}\neq e$ and, thus, we have $(se)^{\ast}< e$. Thus we have $I^{\ast}_e\neq\emptyset$.

(4) Similar to (3), the statement holds.
\end{proof}

\begin{prop}\label{Ie2}
We have $E(I^{\ast}_e)=E(I^{+}_e)$ and $\abs{I^{\ast}_e}=\abs{I^{+}_e}$. Moreover, we have $\abs{\widetilde{L}_e}=\abs{\widetilde{R}_e}$.
\end{prop}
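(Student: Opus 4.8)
The plan is to reduce all three assertions to a single statement, proved by Noetherian induction over the finite meet-semilattice $E(S)$: that $\abs{\widetilde{\eL}_e}=\abs{\widetilde{\R}_e}$ for every $e\in E(S)$. I would first dispose of the equality $E(I^r_e)=E(I^l_e)$. For any idempotent $f$ one has $f^{{+}_r}=f=f^{{+}_l}$ (the unique idempotent of $\widetilde{\eL}_f$ is $f^{{+}_r}$, and it equals $f$, as noted before the statement), so $f\in I^r_e\iff f<e\iff f\in I^l_e$, which gives the first claim.

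Next I would record the key consequence of Condition~\ref{itm:star}. Applied to an idempotent $e$, the set of elements of $S$ fixed by $e$ under left multiplication is exactly $eS$ and the set fixed under right multiplication is exactly $Se$; hence $\abs{eS}=\abs{Se}$. Combining this with the disjoint decompositions $Se=I^r_e\bigcupdot\widetilde{\eL}_e$ and $eS=I^l_e\bigcupdot\widetilde{\R}_e$ established in the paragraph preceding the statement yields
$$\abs{I^r_e}+\abs{\widetilde{\eL}_e}=\abs{I^l_e}+\abs{\widetilde{\R}_e}.$$

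Now the induction on $e$ in $(E(S),\leq)$. Assume $\abs{\widetilde{\eL}_f}=\abs{\widetilde{\R}_f}$ for all idempotents $f<e$; this hypothesis is vacuous when $e$ is minimal, in which case $E(I^r_e)=\emptyset$ forces $I^r_e=I^l_e=\emptyset$ via the decomposition below. Using $I^r_e=\bigcupdot_{f\in E(I^r_e)}\widetilde{\eL}_f$ and $I^l_e=\bigcupdot_{f\in E(I^r_e)}\widetilde{\R}_f$ (legitimate since $E(I^r_e)=E(I^l_e)$) together with the inductive hypothesis gives $\abs{I^r_e}=\sum_{f\in E(I^r_e)}\abs{\widetilde{\eL}_f}=\sum_{f\in E(I^r_e)}\abs{\widetilde{\R}_f}=\abs{I^l_e}$. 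Feeding this into the displayed identity forces $\abs{\widetilde{\eL}_e}=\abs{\widetilde{\R}_e}$, closing the induction. The equality $\abs{I^r_e}=\abs{I^l_e}$ for an arbitrary idempotent $e$ is then immediate by the same summation, and the final clause is exactly the induction statement.

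The only delicate point is the precise verification of the two families of disjoint unions above, together with their left-handed analogues. These rest on the facts that $se=s$ implies $s^{{+}_r}\leq e$, that $s^{{+}_r}$ is always idempotent, and that an idempotent $f$ satisfies $f^{{+}_r}=f$, all of which are available from the discussion preceding the statement. Beyond making these decompositions explicit the argument is pure bookkeeping, with the interplay between Condition~\ref{itm:star} and the semilattice induction doing the real work; I do not anticipate a genuine obstacle.
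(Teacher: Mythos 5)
Your proof is correct and follows essentially the same route as the paper: both arguments rest on Condition~\ref{itm:star} applied to an idempotent (giving $\abs{Se}=\abs{eS}$), the disjoint decompositions $Se=I^r_e\bigcupdot\widetilde{\eL}_e$, $eS=I^l_e\bigcupdot\widetilde{\R}_e$, $I^r_e=\bigcupdot_{f}\widetilde{\eL}_f$, $I^l_e=\bigcupdot_{f}\widetilde{\R}_f$, and an induction up the semilattice $E(S)$. The only cosmetic difference is that the paper inducts on the length of the longest chain of idempotents below $e$ with inductive statement $\abs{I^r_f}=\abs{I^l_f}$, whereas you run Noetherian induction on $(E(S),\leq)$ with inductive statement $\abs{\widetilde{\eL}_f}=\abs{\widetilde{\R}_f}$; these are interchangeable.
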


\begin{proof}
Let $f\in E(I^{\ast}_e)$. 
Then, we have $f<e$. 
So $f\in E(I^{+}_e)$. 
Similarly, we have $E(I^{+}_e)\subseteq E(I^{\ast}_e)$.

Let $n$ be the largest integer such that there is a chain $e_1<e_2<\cdots<e_n=e$ with $e_1,\ldots,e_n\in E(I^{\ast}_e)$.
We show by induction on $n$ that $\abs{I^{\ast}_e}=\abs{I^{+}_e}$.
If $n=1$, then we have $I^{\ast}_e=\emptyset$ and by Proposition~\ref{Ie}.(2), $I^{\ast}_e=I^{+}_e$.
Assume then that $n > 1$. 
Let $f\in E(I^{\ast}_e)$.
By the assumption of the induction, we have $\abs{I^{\ast}_f}=\abs{I^{+}_f}$. 
Since $Sf=\{s\in S\mid sf=s\}$, $fS=\{s\in S\mid fs=s\}$ and $S$ satisfies Condition~\ref{itm:star}, we have $\abs{Sf}=\abs{fS}$. 
It follows that $\abs{\widetilde{L}_f}=\abs{\widetilde{R}_f}$. 
Since $I^{\ast}_e=\bigcup\limits_{f\in E(I^{\ast}_e)}\widetilde{L}_f$ and $I^{+}_e=\bigcup\limits_{f\in E(I^{\ast}_e)}\widetilde{R}_f$, we have $\abs{I^{\ast}_e}=\abs{I^{+}_e}$. 
Also, we have $\abs{\widetilde{L}_e}=\abs{\widetilde{R}_e}$.
\end{proof}

Note that, we can also conclude that $I^{\ast}_e=\emptyset$ if and only if $eSe =G_e=\widetilde{H}_e$ where $G_e$ is the maximal subgroup of $S$ at $e$ is the group of units of $eSe$. 

It is generally not true that for an idempotent $e\in E(S)$, the subset $I_e^{\ast}$ is an ideal (see Example~\ref{Exa4}).
Also, it is not necessarily true that for an idempotent $e\in E(S)$, the subset $I_e^{\ast}$ is a subset of $eSe$ (see Example~\ref{Exa5}).


\section{The determinant of finite semigroups of the pseudovariety $\pv{ECOM}$}

The relationships $\leq^{+}$, $\leq^{\ast}$ and $\ll$ are all coincide when $S$ has central idempotents. 
They can all be represented by $\leq$, where $s \leq t$ if there exists an idempotent $e \in E(S)$ such that $s = te$. 
Also, we have $I^{\ast}_e=I^{+}_e$, for an idempotent $e$. We can simplify the notation $I^{\ast}_e$ to $I_e$.
In this case, for an idempotent $e \in E(S)$, we define $\widetilde{H}^0_e$  as the Rees quotient $eSe/I_e$ if $I_e$ is non-empty.
By Proposition~\ref{Ie}, if $I_e$ is empty, then $\widetilde{H}^0_e=G_e\cup\{z\}$ where $z$ is an adjoined zero. 
Note that $\widetilde{H}^0_e$ forms a monoid with identity $e$ and can be identified with $\widetilde{H}_e\cup \{z\}$ where $z$ is a zero and, for $a,b \in\widetilde{H}_e$, we have that
\begin{equation*}
a\cdot b=
\begin{cases}
ab,&\text{if}\ ab\in \widetilde{H}_e\\
z,&\text{else}.
\end{cases}
\end{equation*}
The determinant of $S$ when $S$ has central idempotents, as stated in Theorem~6.6 of \cite{Ste-Fac-det}, is computed as follows:

\begin{thm}\label{central-idem}
Let $S$ be a finite idempotent semigroup has central idempotents. For $s \in S$, put $y_s=\sum\limits_{t\leq s}\mu_S(t, s)x_t$. Then
$\theta_S = \prod\limits_{e\in E(S)}\widetilde{\theta}_{\widetilde{H}^0_e}(Y_e)$ where $Y_e=\{y_s\mid s\in \widetilde{H}_e\}$.
\end{thm}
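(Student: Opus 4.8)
The plan is to block‑diagonalise the algebra $\mathbb{C}S$ along the semilattice $E(S)$ by means of the Möbius function of $\leq$, to recognise each block as a contracted semigroup algebra $\mathbb{C}_0\widetilde{H}^0_e$, and to read the stated product off from the factorisation of a determinant over a block decomposition. I write $\mu$ for the Möbius function of $(S,\leq)$; since $E(S)$ is a down‑set of $(S,\leq)$ (if $s\leq e\in E(S)$ then $s=ef\in E(S)$ for some $f\in E(S)$), the intervals of $E(S)$ coincide with those of $(S,\leq)$, so the restriction of $\mu$ to $E(S)\times E(S)$ is the Möbius function of the semilattice $E(S)$.

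First I would introduce the idempotents $\varepsilon_e=\sum_{f\leq e}\mu(f,e)\,f\in\mathbb{C}E(S)$ for $e\in E(S)$, so that $e=\sum_{f\leq e}\varepsilon_f$ by Möbius inversion; the usual computation in a finite semilattice algebra gives $\varepsilon_e\varepsilon_f=0$ for $e\neq f$, and since the idempotents of $S$ are central the $\varepsilon_e$ are central idempotents of $\mathbb{C}S$. Using Lemma~\ref{phiESNeqEmptyset} to pick $e\in\varphi^r(s)$, so that $s=se$ and $e\sum_f\varepsilon_f=\sum_{f\leq e}\varepsilon_f=e$, one gets $s\sum_f\varepsilon_f=(se)\sum_f\varepsilon_f=se=s$ and, symmetrically, $\big(\sum_f\varepsilon_f\big)s=s$; hence $\sum_{e\in E(S)}\varepsilon_e=1_{\mathbb{C}S}$ and $\mathbb{C}S=\bigoplus_{e\in E(S)}\mathbb{C}S\varepsilon_e$ is a direct product of two‑sided ideals.

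The key point — and the step I expect to be the main obstacle — is the identity $\sum_{t\leq s}\mu(t,s)\,t=s\,\varepsilon_{s^{+}}$ in $\mathbb{C}S$, which simultaneously matches the Möbius‑transformed basis with the block decomposition and pins down the variables $y_s$. To prove it I would first record that, for $g,g'\in E(S)$, $g'\varepsilon_g=\varepsilon_g$ if $g\leq g'$ and $g'\varepsilon_g=0$ otherwise; consequently, for arbitrary $s\in S$, $s\varepsilon_g=(sg)\varepsilon_g$ (because $g\varepsilon_g=\varepsilon_g$), and $(sg)\varepsilon_g=(sg)(sg)^{+}\varepsilon_g$ vanishes unless $(sg)^{+}=g$ (note $(sg)^{+}\leq g$ always). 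Summing over $g\leq s^{+}$, using $\sum_{g\leq s^{+}}\varepsilon_g=s^{+}$ and $ss^{+}=s$, and re‑indexing along the bijection $s'\mapsto(s')^{+}$ from $\{s'\in S:s'\leq s\}$ onto $\{g\in E(S):g\leq s^{+},\ (sg)^{+}=g\}$ (with inverse $g\mapsto sg$; here $s'=s(s')^{+}$ for $s'\leq s$ by Proposition~\ref{leqES}), I obtain $s=\sum_{g\leq s^{+}}s\varepsilon_g=\sum_{s'\leq s}s'\varepsilon_{(s')^{+}}$. As this holds for every $s$, Möbius inversion forces $\eta_s:=\sum_{t\leq s}\mu(t,s)\,t=s\,\varepsilon_{s^{+}}$; in particular $\eta_s\in\mathbb{C}S\varepsilon_{s^{+}}$, and since $\{\eta_s:s\in S\}$ is a basis of $\mathbb{C}S$ (its transition matrix from the basis $S$ is unitriangular for $\leq$), each $\{\eta_s:s\in\widetilde{H}_e\}$ is a basis of $\mathbb{C}S\varepsilon_e$.

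Finally I would identify and assemble the blocks. For $s,t\in\widetilde{H}_e$ one has $\eta_s\eta_t=(s\varepsilon_e)(t\varepsilon_e)=(st)\varepsilon_e$, which is $\eta_{st}$ when $st\in\widetilde{H}_e$ and is $0$ when $st\in I_e$ (then $(st)^{+}<e$, so $(st)\varepsilon_e=(st)(st)^{+}\varepsilon_e=0$), using $eSe=\widetilde{H}_e\sqcup I_e$; and $\eta_s\eta_t=0$ when $s^{+}\neq t^{+}$, the factors lying in orthogonal ideals. Hence $s\mapsto\eta_s$ ($s\in\widetilde{H}_e$), $z\mapsto0$, is an isomorphism of based algebras $\mathbb{C}_0\widetilde{H}^0_e\cong\mathbb{C}S\varepsilon_e$ (reducing to $\mathbb{C}G_e\cong\mathbb{C}S\varepsilon_e$ when $I_e=\emptyset$, which is why the convention $\widetilde{H}^0_e=G_e\cup\{z\}$ is taken), so the Cayley table of $\mathbb{C}S$ in the basis $\mathcal{B}=\bigcup_{e}\{\eta_s:s\in\widetilde{H}_e\}$ is block diagonal with $e$‑block $\widetilde{C}(\widetilde{H}^0_e)$, giving $\theta_{(\mathbb{C}S,\mathcal{B})}=\prod_{e}\widetilde{\theta}_{\widetilde{H}^0_e}$. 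It remains to return to the distinguished basis $S$: the change‑of‑basis matrix is the Möbius matrix $N$ with $N_{s,t}=\mu(t,s)$ if $t\leq s$ and $0$ otherwise, which is unitriangular with $\det N=1$, and a routine change‑of‑basis computation for determinants of based algebras shows that $\theta_{(\mathbb{C}S,\mathcal{B})}$, with the variable attached to $\eta_s$ renamed $y_s=\sum_{t\leq s}\mu(t,s)x_t$ (equivalently $x_t=\sum_{s\leq t}y_s$, the inverse substitution), equals $\theta_S(X)$. Comparing the two evaluations yields $\theta_S(X)=\prod_{e\in E(S)}\widetilde{\theta}_{\widetilde{H}^0_e}(Y_e)$ with $Y_e=\{y_s:s\in\widetilde{H}_e\}$, as claimed. (The same argument can be phrased without mentioning $\mathbb{C}S$ explicitly, as a sequence of Möbius row and column operations applied to the matrix $[x_{st}]$ after the substitution $x_{st}\mapsto\sum_{u\leq st}y_u$; the combinatorial input is again the identity of the previous paragraph.)
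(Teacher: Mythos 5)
The paper does not prove this statement itself---it is imported as Theorem 6.6 of \cite{Ste-Fac-det}---but your argument is correct and is essentially the standard one: the identity $\eta_s=s\,\varepsilon_{s^{+}}$ and the resulting block decomposition $\mathbb{C}S\cong\prod_e\mathbb{C}_0\widetilde{H}^0_e$ are precisely the (inverse of the) zeta transform $Z$ that the paper uses to prove its own Theorem~\ref{Main-Theorem}, and your final change-of-basis step is the cited unimodularity argument (Proposition 2.6 of \cite{Ste-Fac-det}). The only caveat concerns hypotheses: your appeal to Lemma~\ref{phiESNeqEmptyset} to get $\varphi^r(s)\neq\emptyset$ presupposes that $\mathbb{C}S$ is unital (equivalently that $S=SE(S)$), which is not literally in the statement as printed but is part of the standing hypotheses of the source theorem, and is automatic if ``idempotent semigroup'' is read literally, since then $s\in\varphi^r(s)$.
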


In this paper, our aim is to compute non-zero determinant of semigroups in $\pv{ECOM}$.
Using a similar method as in \cite{Ste-Fac-det} for semigroups has central idempotents. 
Then, let $S\in\mathsf{ECom}$ in this section such that $S$ satisfies Conditions~\ref{itm:star2}. 

Let $Z\colon \mathbb{C}S \rightarrow \mathbb{C}S$ be a map given by $Z(s) =\sum\limits_{s'\ll s}s'$ on $s \in S$, with a linear extension.
By applying M\"{o}bius inversion, we can establish an inverse for $Z$, making it bijective. As mentioned in the following proposition.

\begin{prop}\label{Z}
The mapping $Z$ is bijective. 
\end{prop}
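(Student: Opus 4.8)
The plan is to show that $Z$ is linear (it is defined on the basis $S$ and extended linearly) and then to exhibit it as an invertible linear operator by a triangularity argument with respect to the partial order $\ll$. Since $S$ is finite, it suffices to prove injectivity or, equivalently, that the $S \times S$ matrix of $Z$ with respect to the basis $S$ has non-zero determinant.

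First I would record that, by Proposition~\ref{leqES}(5), $\ll$ is a partial order on $S$. Order the elements of $S$ as $s_1, \ldots, s_n$ by a linear extension of $\ll$, so that $s_i \ll s_j$ implies $i \le j$. In this ordering the matrix of $Z$ is lower triangular: the coefficient of $s_i$ in $Z(s_j)$ is $1$ if $s_i \ll s_j$ and $0$ otherwise; in particular the diagonal entries are all equal to $1$ (since $s \ll s$ always, again by Proposition~\ref{leqES}(5)). A unitriangular matrix over $\mathbb{C}$ is invertible, so $Z$ is bijective.

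The only point that needs a little care — and the step I expect to be the mild obstacle — is making sure the matrix really is triangular, i.e.\ that the relation $\ll$ has no ``cycles'' other than the trivial $s \ll s$; this is exactly the antisymmetry of $\ll$, which was already established in Proposition~\ref{leqES}(5) via the identity $s = s^{+_l} t s^{+_r}$. Given antisymmetry together with reflexivity, a linear extension of $\ll$ exists, and the argument above goes through verbatim. One could alternatively avoid choosing a linear extension and argue directly: if $Z(\sum_s \lambda_s s) = 0$, pick $s$ maximal with respect to $\ll$ among those with $\lambda_s \neq 0$; the basis element $s$ appears in $Z(s')$ only when $s \ll s'$, and by maximality the only such $s'$ with $\lambda_{s'} \neq 0$ is $s$ itself, forcing $\lambda_s = 0$, a contradiction. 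Either way, $Z$ is injective, hence bijective on the finite-dimensional space $\mathbb{C}S$.

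Finally I would note, for use in the subsequent development, that the inverse of $Z$ is given by Möbius inversion over the poset $(S, \ll)$: writing $Z$ as convolution by the zeta function of $(S,\ll)$, its inverse is convolution by $\mu_{(S,\ll)}$, so $Z^{-1}(s) = \sum_{s' \ll s} \mu_{(S,\ll)}(s', s)\, s'$. This mirrors the role played by $\mu_S$ in Theorem~\ref{central-idem} and is what will let us change basis in $\mathbb{C}S$ when computing $\theta_S$.
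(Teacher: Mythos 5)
Your proposal is correct and follows essentially the same route as the paper: the paper's own proof is precisely your ``direct'' variant (pick a $\ll$-maximal element of the support of a purported kernel element and observe its coefficient survives), and it likewise obtains surjectivity from the M\"obius inversion formula $Z\bigl(\sum_{t\ll s}\mu_S(t,s)t\bigr)=s$. The unitriangular-matrix packaging is an equivalent reformulation, so there is nothing to add.
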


For $s$ and $t$ in $S$,
one recursively defines two sequences $s_n$ and $t_n$ by
$$s_0=s, t_0=t$$
and
$$s_{n+1}=s_nt_n^{+}, t_{n+1}=s_n^{\ast}t_n.$$

\begin{prop}\label{somegaAndtomega}
We have $s_{n+1}=st_n^{+}$ and $t_{n+1}=s_n^{\ast}t$. Also, we have $s_{n+1}^{\ast}, t_{n+1}^{+}\leq s_n^{\ast}, t_n^{+}$.
\end{prop}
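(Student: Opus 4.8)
The plan is to prove the two assertions in the order opposite to how they are stated: first the inequalities $s_{n+1}^{{+}_r},\,t_{n+1}^{{+}_l}\leq s_n^{{+}_r},\,t_n^{{+}_l}$ (read as: each of $s_{n+1}^{{+}_r}$ and $t_{n+1}^{{+}_l}$ lies below both $s_n^{{+}_r}$ and $t_n^{{+}_l}$), which come straight out of the recursion, and then the closed forms $s_{n+1}=st_n^{{+}_l}$ and $t_{n+1}=s_n^{{+}_r}t$ by induction on $n$, feeding in those inequalities.

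For the inequalities, fix $n$ and work directly from $s_{n+1}=s_nt_n^{{+}_l}$ and $t_{n+1}=s_n^{{+}_r}t_n$. Since $t_n^{{+}_l}\in E(S)$, we have $s_{n+1}\leq^r s_n$, so Proposition~\ref{leqES}.(2) yields $s_{n+1}^{{+}_r}\leq s_n^{{+}_r}$. Also $s_{n+1}t_n^{{+}_l}=s_n(t_n^{{+}_l})^2=s_nt_n^{{+}_l}=s_{n+1}$, so $t_n^{{+}_l}\in\varphi^r(s_{n+1})$, and minimality of $s_{n+1}^{{+}_r}$ in $\varphi^r(s_{n+1})$ forces $s_{n+1}^{{+}_r}\leq t_n^{{+}_l}$. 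Dually, $t_{n+1}\leq^l t_n$ gives $t_{n+1}^{{+}_l}\leq t_n^{{+}_l}$, and $s_n^{{+}_r}t_{n+1}=(s_n^{{+}_r})^2t_n=t_{n+1}$ gives $t_{n+1}^{{+}_l}\leq s_n^{{+}_r}$. This establishes the second assertion, and note that it never uses the closed forms.

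For the closed forms I induct on $n$. The case $n=0$ is the definition: $s_1=s_0t_0^{{+}_l}=st^{{+}_l}$ and $t_1=s_0^{{+}_r}t_0=s^{{+}_r}t$. Suppose $s_n=st_{n-1}^{{+}_l}$ for some $n\geq 1$. Applying the inequality already proved, with index $n-1$, gives $t_n^{{+}_l}\leq t_{n-1}^{{+}_l}$; hence $t_{n-1}^{{+}_l}t_n^{{+}_l}=t_n^{{+}_l}$, because the idempotents of $S$ commute and $\leq$ on $E(S)$ is the meet order. Therefore $s_{n+1}=s_nt_n^{{+}_l}=st_{n-1}^{{+}_l}t_n^{{+}_l}=st_n^{{+}_l}$. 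The argument for $t_{n+1}=s_n^{{+}_r}t$ is dual, using $s_n^{{+}_r}\leq s_{n-1}^{{+}_r}$.

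I do not expect a genuine obstacle here; the only point needing care is the ordering of the argument, namely that the inequalities must be derived from the recursion $s_{n+1}=s_nt_n^{{+}_l}$, $t_{n+1}=s_n^{{+}_r}t_n$ rather than from the closed forms, since the induction proving the closed forms consumes those inequalities. Everything else is routine manipulation with commuting idempotents and the minimality characterisation of $s^{{+}_r}$ and $s^{{+}_l}$ set up in Section~\ref{Relationll}.
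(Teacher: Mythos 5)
Your proposal is correct and follows essentially the same route as the paper: prove the inequalities $s_{n+1}^{+_r}, t_{n+1}^{+_l}\leq s_n^{+_r}, t_n^{+_l}$ first directly from the recursion (using commuting idempotents and the minimality of $s^{+_r}$, $s^{+_l}$), then establish the closed forms by induction on $n$ using those inequalities. The only cosmetic difference is that you invoke Proposition~\ref{leqES}.(2) for $s_{n+1}^{+_r}\leq s_n^{+_r}$ where the paper computes $s_{n+1}s_n^{+_r}=s_{n+1}$ by hand, which amounts to the same thing.
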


\begin{proof}
First, we prove that $s_{n+1}^{\ast}, t_{n+1}^{+}\leq s_n^{\ast}, t_n^{+}$.
Since
$$s_{n+1}s_n^{\ast} = s_nt_n^{+}s_n^{\ast} = s_ns_n^{\ast}t_n^{+} = s_nt_n^{+} = s_{n+1},$$
we have $s_{n+1}^{\ast}\leq s_n^{\ast}$.
Also, as
$$s_{n+1}t_n^{+} = s_nt_n^{+}t_n^{+} = s_nt_n^{+} = s_{n+1},$$
we have $s_{n+1}^{\ast}\leq t_n^{+}$. Similarly, we have $t_{n+1}^{+}\leq s_n^{\ast}, t_n^{+}$.

Let us show that $s_{n+1}=st_n^{+}$ and $t_{n+1}=s_n^{\ast}t$ by induction on $n$.
The case $n = 0$ is clear.
Assume then that $n > 0$, and that the result holds for smaller values of $n$.
We have $s_{n+1} = s_nt_n^{+}$. By hypothesis of induction, we have $s_n = st_{n-1}^{+}$. 
It follows that $s_{n+1} = st_{n-1}^{+}t_n^{+}$. Now, as $t_{n}^{+}\leq t_{n-1}^{+}$,
we have $s_{n+1} = st_n^{+}$. Similarly, we have $t_{n+1}=s_n^{\ast}t$.
\end{proof}

\begin{prop}\label{AandE}
We have $s_nt_n = st$, for every integer $n\geq 0$. Moreover, we have $s_net_n = set$, for every idempotent $e$.
\end{prop}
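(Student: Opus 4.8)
The plan is to prove both statements by induction on $n$, and the key point is to use only the one-step recursion $s_{n+1}=s_nt_n^{{+}_l}$, $t_{n+1}=s_n^{{+}_r}t_n$, together with three facts: the idempotents of $S$ commute (since $S\in\mathsf{ECom}$), $s_ns_n^{{+}_r}=s_n$ (since $s_n^{{+}_r}\in\varphi^r(s_n)$), and $t_n^{{+}_l}t_n=t_n$ (since $t_n^{{+}_l}\in\varphi^l(t_n)$). In particular Proposition~\ref{somegaAndtomega} is not needed for the argument.

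For the identity $s_nt_n=st$, the base case $n=0$ is immediate, and for the inductive step I would compute
\begin{align*}
s_{n+1}t_{n+1}=(s_nt_n^{{+}_l})(s_n^{{+}_r}t_n)=s_n\bigl(s_n^{{+}_r}t_n^{{+}_l}\bigr)t_n=(s_ns_n^{{+}_r})(t_n^{{+}_l}t_n)=s_nt_n,
\end{align*}
using that the idempotents $t_n^{{+}_l}$ and $s_n^{{+}_r}$ commute, and then invoke the induction hypothesis to get $s_{n+1}t_{n+1}=st$. For the ``moreover'' part I would fix an idempotent $e$ and again induct on $n$; the base case is $s_0et_0=set$, and for the step, since $e$, $t_n^{{+}_l}$ and $s_n^{{+}_r}$ are pairwise commuting idempotents,
\begin{align*}
s_{n+1}et_{n+1}=(s_nt_n^{{+}_l})\,e\,(s_n^{{+}_r}t_n)=s_n\bigl(s_n^{{+}_r}e\,t_n^{{+}_l}\bigr)t_n=(s_ns_n^{{+}_r})\,e\,(t_n^{{+}_l}t_n)=s_net_n,
\end{align*}
which equals $set$ by the induction hypothesis.

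The computations themselves are routine; the one thing to be careful about is \emph{not} to substitute the collapsed identities $s_{n+1}=st_n^{{+}_l}$ and $t_{n+1}=s_n^{{+}_r}t$ of Proposition~\ref{somegaAndtomega} into the inductive step. Doing so would require $ss_n^{{+}_r}=s$, which fails as soon as $s_n^{{+}_r}<s^{{+}_r}$ (because $s^{{+}_r}$ is the minimum of $\varphi^r(s)$), and the argument would become circular. Working with the one-step recursion instead makes both inductions reduce to the single absorption step displayed above, which is exactly what the definitions of $s_n^{{+}_r}$, $t_n^{{+}_l}$ and the commutativity of idempotents provide.
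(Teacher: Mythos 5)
Your proof is correct, and it takes a more direct route than the paper's. The paper first invokes Proposition~\ref{somegaAndtomega} to rewrite $s_n=st_{n-1}^{{+}_l}$ and $t_n=s_{n-1}^{{+}_r}t$, and then runs an auxiliary induction showing $st_m^{{+}_l}s_m^{{+}_r}t=st$ for $m\leq n-1$, from which $s_nt_n=st$ follows; your version inducts directly on $s_nt_n$ via the one-step recursion, and the single computation $s_{n+1}t_{n+1}=s_nt_n^{{+}_l}s_n^{{+}_r}t_n=s_ns_n^{{+}_r}t_n^{{+}_l}t_n=s_nt_n$ is exactly the algebraic kernel that also sits inside the paper's inductive step (there it appears as $s_mt_m^{{+}_l}s_m^{{+}_r}t_m=s_mt_m$). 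What your approach buys is brevity and independence from Proposition~\ref{somegaAndtomega}; what the paper's approach buys is the intermediate identity $st_m^{{+}_l}s_m^{{+}_r}t=st$ in terms of the original $s$ and $t$, which dovetails with how $\epsilon_{s,t}$ is used later (e.g.\ $s\epsilon_{s,t}t=st$). One small caution: your closing remark slightly overstates the danger of the collapsed identities --- it is true that $ss_n^{{+}_r}=s$ generally fails when $s_n^{{+}_r}<s^{{+}_r}$, but the paper does not need that equality; it proves $st_m^{{+}_l}s_m^{{+}_r}t=st$ by its own induction, so its argument is not circular, merely longer. The ``moreover'' part of your proof is handled the same way and is fine, matching the paper's one-line remark that the case with an interposed idempotent $e$ is similar.
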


\begin{proof}
By Proposition~\ref{somegaAndtomega}, we have 
$s_n = st_{n-1}^{+}$ and 
$t_n = s_{n-1}^{\ast}t$.

Let us show that $st_m^{+}s_m^{\ast}t = st$ by induction on $m \leq n-1$.
First assume that $m = 0$.
Hence, we have 
$$st_0^{+}s_0^{\ast}t = 
  ss_0^{\ast}t_0^{+}t = st.$$
Assume then that $m > 0$, and that the result holds for smaller values of $m$.
By hypothesis of induction and Proposition~\ref{somegaAndtomega}, we have
\begin{align*}
      st_m^{+}s_m^{\ast}t
  & = st_{m-1}^{+}t_m^{+}s_m^{\ast}s_{m-1}^{\ast}t = s_mt_m^{+}s_m^{\ast}t_m \\
  & = s_ms_m^{\ast}t_m^{+}t_m = s_mt_m = st_{m-1}^{+}s_{m-1}^{\ast}t\\
  & = st.
\end{align*}  
Therefore, we have $s_nt_n = st_{n-1}^{+}s_{n-1}^{\ast}t = st$.

Similar as above, we have $s_net_n = set$.
\end{proof}

Since $S$ is finite, by Proposition~\ref{somegaAndtomega}, there exists an integer $m$ such that 
if $m\neq 0$, then $t_{m}^{+}=s_{m}^{\ast}$ and $t_{m-1}^{+} \neq s_{m-1}^{\ast}$, otherwise $t_0^{+}=s_0^{\ast}$. 
Then, we define a map \[\varepsilon\colon S\times S \rightarrow E(S)\]
given by 
\[\varepsilon(s,t)=t_{m}^{+} (=s_{m}^{\ast}).\]
Then, we have \[s_{m+1}=s_{m}=s\varepsilon(s,t)\ \text{and}\ t_{m+1}=t_{m}=\varepsilon(s,t)t.\]
It follows that $(s\varepsilon(s,t))^{\ast}=(\varepsilon(s,t)t)^{+}=\varepsilon(s,t)$.

Let $E^{st}=\{e\in E(S)\mid set=st\}$. Since the idempotents $s^{\ast}$ and $t^{+}$ are in the set $E^{st}$, $E^{st}$ is non empty.
By Propositions~\ref{somegaAndtomega} and \ref{AandE}, we have $s\varepsilon(s,t)t = st$ and $\varepsilon(s,t)\leq t^{+}, s^{\ast}$.
It is possible that the subset $E^{st}$ has more than one minimal element (see Example~\ref{Exa8}). 

We define the multiplication
$\sharp \colon \mathbb{C}S \times \mathbb{C}S \rightarrow \mathbb{C}S$ 
as follows:\\
\begin{equation*}
s\sharp t = 
\begin{cases}
  st,& \text{if}\ s^{+}=(st)^{+}, t^{\ast}=(st)^{\ast}\ \text{and}\ s^{\ast}=t^{+};\\ 
  0,& \text{otherwise},
\end{cases}
\end{equation*}
for every $s,t\in S$. 
The multiplication operation $\sharp$ is not always associative (as demonstrated in Example~\ref{Exa9}).
Also, we define the multiplication
$\sharp\limits^{e} \colon \mathbb{C}S \times \mathbb{C}S \rightarrow \mathbb{C}S$, for some $e\in E(S)$,
as follows:\\
\begin{equation*}
s\sharp^{e} t = 
\begin{cases}
  st & \text{if}\ s^{+}=(st)^{+}, t^{\ast}=(st)^{\ast}\ \text{and}\ s^{\ast}=t^{+}=e;\\
  0& \text{otherwise},
\end{cases}
\end{equation*}

According to Proposition~\ref{Z}, the mapping $Z$ is a one-to-one correspondence. 
We define the following multiplication on $\mathbb{C}S \times \mathbb{C}S$.

\begin{equation*}
Z(s)\boldsymbol{*}Z(t) = 
\sum\limits_{\substack{s'\ll s,\\ t'\ll t}}
s'\stackrel{\varepsilon({s'}^{+}s,t{t'}^{\ast})}{\sharp} t'
\end{equation*}
By applying M\"{o}bius inversion, we have 
\[
u= \displaystyle\sum\limits_{u'\ll u}\mu_S(u', u)Z(u')\
\text{and}\ 
v= \displaystyle\sum\limits_{v'\ll v}\mu_S(v', v)Z(v'),\]
 for every $u,v \in S$. 
Then \[u\boldsymbol{*}v=\displaystyle\sum\limits_{u'\ll u,v'\ll v}\mu_S(u', u)\mu_S(v', v)Z(u')\boldsymbol{*}Z(v').\]

\begin{prop}\label{s*t}
We have $$Z(s)\boldsymbol{*}Z(t)=Z(st)$$ 
for every $s,t\in S$.
\end{prop}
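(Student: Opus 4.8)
The plan is to exploit that $Z$ is a bijection (Proposition~\ref{Z}). By definition
$$Z(s)\boldsymbol{*}Z(t)=\sum_{s'\ll s,\ t'\ll t} s'\stackrel{\epsilon_{{s'}^{{+}_l}s,\,t{t'}^{{+}_r}}}{\sharp}t',$$
so it suffices to prove that $(s',t')\mapsto s't'$ is a bijection from the set $\mathcal{P}$ of pairs $(s',t')$ satisfying $s'\ll s$, $t'\ll t$ and such that the corresponding $\sharp$-term is non-zero, onto $\{w\in S\mid w\ll st\}$; granting this, $Z(s)\boldsymbol{*}Z(t)=\sum_{w\ll st}w=Z(st)$.

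First I would analyse a generic non-zero term. Let $(s',t')\in\mathcal{P}$ and write $a={s'}^{{+}_l}s$, $b=t{t'}^{{+}_r}$ and $e=\epsilon_{a,b}$, the idempotent decorating $\sharp$ for this pair. The defining clauses of $\stackrel{e}{\sharp}$ give ${s'}^{{+}_r}={t'}^{{+}_l}=e$, ${s'}^{{+}_l}=(s't')^{{+}_l}$ and ${t'}^{{+}_r}=(s't')^{{+}_r}$. Combining $s'\ll s$, $t'\ll t$ with Proposition~\ref{leqES}(3) yields $s'={s'}^{{+}_l}s\,{s'}^{{+}_r}=ae$ and $t'={t'}^{{+}_l}t\,{t'}^{{+}_r}=eb$, so, using $e^2=e$ and the identity $a\epsilon_{a,b}b=ab$,
$$s't'=ae\cdot eb=a\epsilon_{a,b}b=ab={s'}^{{+}_l}(st){t'}^{{+}_r}.$$
Together with ${s'}^{{+}_l}=(s't')^{{+}_l}$ and ${t'}^{{+}_r}=(s't')^{{+}_r}$ this reads $s't'=(s't')^{{+}_l}(st)(s't')^{{+}_r}$, so $s't'\ll st$ by Proposition~\ref{leqES}(3). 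Hence the map lands in $\{w\ll st\}$, and, crucially, every non-zero term satisfies ${s'}^{{+}_l}=(s't')^{{+}_l}$ and ${t'}^{{+}_r}=(s't')^{{+}_r}$.

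For injectivity, if $(s',t')\in\mathcal{P}$ has $s't'=w$, then ${s'}^{{+}_l}=w^{{+}_l}$ and ${t'}^{{+}_r}=w^{{+}_r}$ by the previous paragraph, so the $\sharp$-index equals $\varepsilon:=\epsilon_{w^{{+}_l}s,\,tw^{{+}_r}}$; then ${s'}^{{+}_r}=\varepsilon={t'}^{{+}_l}$ and, by Proposition~\ref{leqES}(3), $s'=w^{{+}_l}s\varepsilon$ and $t'=\varepsilon tw^{{+}_r}$, so $(s',t')$ is determined by $w$. For surjectivity, fix $w\ll st$ and put $\varepsilon=\epsilon_{w^{{+}_l}s,\,tw^{{+}_r}}$, $s'=w^{{+}_l}(s\varepsilon)$, $t'=(\varepsilon t)w^{{+}_r}$. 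Since $s\varepsilon\leq^r s$ and $\varepsilon t\leq^l t$, we have $s'\leq^l s\varepsilon\leq^r s$ and $t'\leq^r\varepsilon t\leq^l t$, i.e.\ $s'\ll s$ and $t'\ll t$. The defining properties of $\varepsilon$ give ${s'}^{{+}_r}=\varepsilon={t'}^{{+}_l}$, and, using $w\ll st$,
$$s't'=w^{{+}_l}s\varepsilon\varepsilon tw^{{+}_r}=(w^{{+}_l}s)\,\varepsilon\,(tw^{{+}_r})=(w^{{+}_l}s)(tw^{{+}_r})=w^{{+}_l}(st)w^{{+}_r}=w,$$
where the middle equality is the identity $x\epsilon_{x,y}y=xy$ with $x=w^{{+}_l}s$, $y=tw^{{+}_r}$. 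Now $w^{{+}_l}s'=s'$ gives ${s'}^{{+}_l}\leq w^{{+}_l}$, while ${s'}^{{+}_l}w={s'}^{{+}_l}s't'=w$ gives $w^{{+}_l}\leq{s'}^{{+}_l}$; hence ${s'}^{{+}_l}=w^{{+}_l}=(s't')^{{+}_l}$, and symmetrically ${t'}^{{+}_r}=w^{{+}_r}=(s't')^{{+}_r}$. Therefore ${s'}^{{+}_l}s=w^{{+}_l}s$ and $t{t'}^{{+}_r}=tw^{{+}_r}$, so the $\sharp$-index for $(s',t')$ is $\varepsilon$, all three clauses of $\stackrel{\varepsilon}{\sharp}$ hold, and the term equals $s't'=w$; thus $(s',t')\in\mathcal{P}$ and maps to $w$, completing the bijection.

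I expect the delicate point to be the bookkeeping of the minimum idempotents $(\cdot)^{{+}_l}$, $(\cdot)^{{+}_r}$: the heart of the matter is that the idempotent $\epsilon_{{s'}^{{+}_l}s,\,t{t'}^{{+}_r}}$ decorating $\sharp$ in the definition of $\boldsymbol{*}$ must coincide with the common value ${s'}^{{+}_r}={t'}^{{+}_l}$ forced by the clauses of $\sharp$, which is precisely what collapses a non-zero $\sharp$-term to the single element $s't'={s'}^{{+}_l}(st){t'}^{{+}_r}$. That collapse is where the identities $s\epsilon_{s,t}t=st$ and $(s\epsilon_{s,t})^{{+}_r}=(\epsilon_{s,t}t)^{{+}_l}=\epsilon_{s,t}$ (recorded just before the definition of $\sharp$) are used, and checking that ${s'}^{{+}_l}=(s't')^{{+}_l}$ and ${t'}^{{+}_r}=(s't')^{{+}_r}$ hold consistently in both directions of the bijection is the part requiring the most care.
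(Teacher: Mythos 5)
Your proof is correct and follows essentially the same route as the paper's: both establish that $(s',t')\mapsto s't'$ is a bijection from the pairs with non-vanishing $\sharp$-term onto $\{w\mid w\ll st\}$, using the identities $a\epsilon_{a,b}b=ab$ and $(a\epsilon_{a,b})^{{+}_r}=(\epsilon_{a,b}b)^{{+}_l}=\epsilon_{a,b}$ together with Proposition~\ref{leqES}(3), and your existence/uniqueness construction $s'=w^{{+}_l}s\varepsilon$, $t'=\varepsilon tw^{{+}_r}$ is the paper's. Your verification that ${s'}^{{+}_l}=w^{{+}_l}$ and ${t'}^{{+}_r}=w^{{+}_r}$ in the surjectivity step is in fact spelled out more carefully than in the paper, which merely asserts it.
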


\begin{proof}
Let $s'\ll s$ and $t'\ll t$. 
The first step is to prove that $s't'\ll st$, if $s'\stackrel{\varepsilon({s'}^{+}s,t{t'}^{\ast})}{\sharp} t'\neq 0$.
It is clear that the following condition holds:
\begin{equation}\label{ConditionsOfs't'2}
{s'}^{+}=(s't')^{+}, {t'}^{\ast}=(s't')^{\ast}\ \text{and}\ {s'}^{\ast}={t'}^{+}.
\end{equation}
Since $s'\ll s$ and $t'\ll t$, we have $s' = {s'}^{+}s{s'}^{\ast}$ and $t' = {t'}^{+}t{t'}^{\ast}$.
Now, as $s'\stackrel{\varepsilon({s'}^{+}s,t{t'}^{\ast})}{\sharp} t'\neq 0$, we have ${s'}^{\ast}={t'}^{+}=\varepsilon({s'}^{+}s,t{t'}^{\ast})$.
Hence, we have $s't' = {s'}^{+}s{s'}^{\ast}{t'}^{+}t{t'}^{\ast} = {s'}^{+}s\varepsilon({s'}^{+}s,t{t'}^{\ast})t{t'}^{\ast} = {s'}^{+}st{t'}^{\ast}$.
As ${s'}^{+}=(s't')^{+}$ and ${t'}^{\ast}=(s't')^{\ast}$, we have $s't'\ll st$.

Secondly, we need to show that if $u\ll st$, for some $u$ in $S$, there exists a unique pair $(s^{\circ},t^{\circ})$ such that $s^{\circ}\ll s$, $t^{\circ}\ll t$,  $s^{\circ}\stackrel{\varepsilon({s^{\circ}}^{+}s,t{t^{\circ}}^{\ast})}{\sharp} t^{\circ}\neq 0$, and $s^{\circ}t^{\circ}=u$.
Since $u\ll st$, we have $u = u^{+}stu^{\ast}$.
Let $s' = u^{+}s$
and $t' = tu^{\ast}$.
It is easily follows that ${s'}^{+} = u^{+}$ and ${t'}^{\ast} = u^{\ast}$.
Let $s''=s'\varepsilon(s',t')$ and $t''=\varepsilon(s',t')t'$. 
Clearly, we have $s''\ll s$, $t''\ll t$ and ${s''}^{\ast} = {t''}^{+} = \varepsilon(s',t')$.
As $s'\varepsilon(s',t')t'=s't'=u$, we have ${s''}^{+} = u^{+}$ and ${t''}^{\ast} = u^{\ast}$. 
Then, we have \[s''\stackrel{\varepsilon({s''}^{+}s=s',t{t''}^{\ast}=t')}{\sharp}t''=s''t''=s'\varepsilon(s',t')t'=u.\]

There is then our desired pair. 
Now, we prove the uniqueness of this existence. 
Let $s_1,s_2,t_1,t_2\in S$ such that $s_1,s_2\ll s$, $t_1,t_2\ll t$, $s_1t_1=s_2t_2=u$, and 
$s_1\stackrel{\varepsilon({s_1}^{+}s,t{t_1}^{\ast})}{\sharp} t_1,s_2\stackrel{\varepsilon({s_2}^{+}s,t{t_2}^{\ast})}{\sharp} t_2\neq 0$.
Then, the pairs $(s_1,t_1)$ and $(s_2,t_2)$ satisfy Condition~(\ref{ConditionsOfs't'2}) and, thus, we have
 $s_1^{+}=s_2^{+}=u^{+}$ and $t_1^{\ast}=t_2^{\ast}=u^{\ast}$. 
Also, we have $s_1^{\ast}=s_2^{\ast}=t_1^{+}=t_2^{+}=\varepsilon(u^{+}s,tu^{\ast})$.
Therefore, we have $s_1=u^{+}s\varepsilon(u^{+}s,tu^{\ast})=s_2$ and $t_1=\varepsilon(u^{+}s,tu^{\ast})su^{\ast}=t_2$. 

The result follows.
\end{proof}

The following theorem is a generalization of the result by Steinberg in~\cite{Ste-Fac-det} for finite semigroups has central idempotents, and uses a similar construction. 

\begin{thm}\label{Zisom}
The mapping $Z$ is an isomorphism of $\mathbb{C}$-algebras. 
\end{thm}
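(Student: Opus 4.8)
The plan is to assemble the desired isomorphism from the pieces already in place. By Proposition~\ref{Z}, the linear map $Z\colon \mathbb{C}S\to\mathbb{C}S$ is a bijection of vector spaces, so the only thing left to verify is that $Z$ is multiplicative when the target copy of $\mathbb{C}S$ is equipped with the product $\boldsymbol{*}$ defined above. But the definition of $\boldsymbol{*}$ was precisely engineered so that $u\boldsymbol{*}u'$ is computed by expanding $u$ and $u'$ in the basis $\{Z(s)\mid s\in S\}$ and extending the rule $Z(s)\boldsymbol{*}Z(t)$ bilinearly; this is the displayed formula $u\boldsymbol{*}u'=\sum c_id_j\,Z(s_i)\boldsymbol{*}Z(t_j)$. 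So by bilinearity of both the semigroup multiplication and $\boldsymbol{*}$, it suffices to check multiplicativity on basis elements, i.e.\ that $Z(s)\boldsymbol{*}Z(t)=Z(st)$ for all $s,t\in S$. That is exactly the content of Proposition~\ref{s*t}.

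Concretely I would argue as follows. First note that $(\mathbb{C}S,\boldsymbol{*})$ is a well-defined $\mathbb{C}$-algebra: the product $\boldsymbol{*}$ is $\mathbb{C}$-bilinear by construction, and one must observe that it is in fact the \emph{transported} product, i.e.\ $u\boldsymbol{*}u' := Z\bigl(Z^{-1}(u)\cdot Z^{-1}(u')\bigr)$ — this is really the definition once one unwinds it, since writing $u=\sum_i c_i Z(s_i)$ means $Z^{-1}(u)=\sum_i c_i s_i$, and Proposition~\ref{s*t} together with linearity gives $Z\bigl(Z^{-1}(u)Z^{-1}(u')\bigr)=\sum_{i,j}c_id_j Z(s_is_j)=\sum_{i,j}c_id_j\,Z(s_i)\boldsymbol{*}Z(t_j)=u\boldsymbol{*}u'$. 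From the identity $u\boldsymbol{*}u'=Z(Z^{-1}(u)Z^{-1}(u'))$ it is immediate that $\boldsymbol{*}$ is associative and bilinear (it inherits these from the semigroup algebra product via the bijection $Z$), so $(\mathbb{C}S,\boldsymbol{*})$ is an associative $\mathbb{C}$-algebra, and $Z\colon(\mathbb{C}S,\cdot)\to(\mathbb{C}S,\boldsymbol{*})$ is by the very same identity an algebra homomorphism; being bijective by Proposition~\ref{Z}, it is an isomorphism. If one prefers not to define $\boldsymbol{*}$ through transport, one checks directly: $Z$ is linear and bijective, and $Z(s)\boldsymbol{*}Z(t)=Z(st)$ by Proposition~\ref{s*t}, whence $Z(ab)=Z(a)\boldsymbol{*}Z(b)$ for all $a,b\in\mathbb{C}S$ by bilinear extension.

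The one genuine subtlety — and the step I would single out as the main obstacle, even though the real work has been done in Proposition~\ref{s*t} — is checking that the product $\boldsymbol{*}$ is \emph{well defined by the given formula}, i.e.\ that the formula $u\boldsymbol{*}u'=\sum c_id_j Z(s_i)\boldsymbol{*}Z(t_j)$ does not depend on the chosen expansion of $u$ and $u'$ in the $Z$-basis and genuinely agrees with the bilinear extension of $s\sharp^{\epsilon}t$-type rule appearing in the displayed definition of $Z(s)\boldsymbol{*}Z(t)$. This is where one must use that $\{Z(s)\mid s\in S\}$ really is a basis (Proposition~\ref{Z}) and that the formula for $Z(s)\boldsymbol{*}Z(t)$ is itself a finite sum over pairs $s'\ll s$, $t'\ll t$ whose value is unambiguous. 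Once well-definedness and associativity of $\boldsymbol{*}$ are in hand, the theorem is just the combination of Proposition~\ref{Z} (bijectivity) and Proposition~\ref{s*t} (multiplicativity on the basis).

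\begin{proof}
By Proposition~\ref{Z}, the linear map $Z$ is a bijection, so $\{Z(s)\mid s\in S\}$ is a $\mathbb{C}$-basis of $\mathbb{C}S$. The product $\boldsymbol{*}$ is defined to be $\mathbb{C}$-bilinear with respect to this basis, and one checks at once that for $u=\sum_i c_i Z(s_i)$ and $u'=\sum_j d_j Z(t_j)$ one has, using Proposition~\ref{s*t},
$$u\boldsymbol{*}u'=\sum_{i,j}c_id_j\,Z(s_i)\boldsymbol{*}Z(t_j)=\sum_{i,j}c_id_j\,Z(s_it_j)=Z\Bigl(\bigl(\textstyle\sum_i c_is_i\bigr)\bigl(\sum_j d_jt_j\bigr)\Bigr)=Z\bigl(Z^{-1}(u)\,Z^{-1}(u')\bigr).$$
In particular $\boldsymbol{*}$ does not depend on the chosen expansions, $(\mathbb{C}S,\boldsymbol{*})$ is an associative $\mathbb{C}$-algebra (associativity and bilinearity are transported from $(\mathbb{C}S,\cdot)$ along the bijection $Z$), and the displayed identity says precisely that $Z\colon(\mathbb{C}S,\cdot)\to(\mathbb{C}S,\boldsymbol{*})$ is an algebra homomorphism. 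Since $Z$ is bijective by Proposition~\ref{Z}, it is an isomorphism of $\mathbb{C}$-algebras.
\end{proof}
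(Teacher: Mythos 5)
Your proposal is correct and follows essentially the same route as the paper, which likewise deduces the theorem directly from Proposition~\ref{Z} (bijectivity) and Proposition~\ref{s*t} (multiplicativity on the basis $\{Z(s)\}$); you merely spell out the bilinear extension and the transport-of-structure identity $u\boldsymbol{*}u'=Z(Z^{-1}(u)Z^{-1}(u'))$, which the paper leaves implicit. (You also silently correct the paper's typographical $\prod$ to the intended $\sum$ in the displayed definition of $u\boldsymbol{*}u'$.)
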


\begin{proof}
By Propositions~\ref{Z} and~\ref{s*t}, the result follows.
\end{proof}

\begin{lem}\label{sPrimeAndsPrimePrime} 
Suppose that \(s''\ll s' \ll s\) and \(t''\ll t' \ll t\).
If \(s''\stackrel{\varepsilon({s''}^{+}s,t{t''}^{\ast})}{\sharp} t''\neq 0\) then, we have \(\varepsilon({s''}^{+}s',t'{t''}^{\ast}) = \varepsilon({s''}^{+}s,t{t''}^{\ast})\).
\end{lem}

\begin{proof}
Let 
\(a = {s''}^{+}s\),
\(b = t{t''}^{\ast}\),
\(c = {s''}^{+}s'\),
and
\(d = t'{t''}^{\ast}\).

We prove, by induction on $n$, that 
\begin{equation}\label{eq1}
c_n^{\ast} \leq a_n^{\ast}\ \text{and}\ d_n^{+} \leq b_n^{+},
\end{equation}
for every $0\leq n$.
We have
\begin{align*}
c_0a_0^{\ast} 
&= {s''}^{+}s'({s''}^{+}s)^{\ast} 
 = {s''}^{+}{s'}^{+}s{s'}^{\ast}({s''}^{+}s)^{\ast}
 = {s'}^{+}{s''}^{+}s({s''}^{+}s)^{\ast}{s'}^{\ast}
 = {s'}^{+}{s''}^{+}s{s'}^{\ast}\\
&= {s''}^{+}{s'}^{+}s{s'}^{\ast} = {s''}^{+}s' = c_0.
\end{align*}
It follows that \(c_0^{\ast} \leq a_0^{\ast}\).
Similarly, we have \(d_0^{+} \leq b_0^{+}\).
Assume then that $n \geq 1$. 
By the assumption of the induction, we have \(c_{n-1}^{\ast} \leq a_{n-1}^{\ast}\) and \(d_{n-1}^{+} \leq b_{n-1}^{+}\). 
Since
\begin{align*}
c_na_n^{\ast} 
&= c_0d_{n-1}^{+}(a_0b_{n-1}^{+})^{\ast}
 = c_0(a_0b_{n-1}^{+})^{\ast}d_{n-1}^{+} 
 = c_0(a_0b_{n-1}^{+})^{\ast}b_{n-1}^{+}d_{n-1}^{+}\\
&= {s''}^{+}s'(a_0b_{n-1}^{+})^{\ast}b_{n-1}^{+}d_{n-1}^{+} 
 = {s''}^{+}{s'}^{+}s{s'}^{\ast}(a_0b_{n-1}^{+})^{\ast}b_{n-1}^{+}d_{n-1}^{+}\\
&= {s'}^{+}{s''}^{+}sb_{n-1}^{+}(a_0b_{n-1}^{+})^{\ast}d_{n-1}^{+}{s'}^{\ast}
 = {s'}^{+}a_0b_{n-1}^{+}(a_0b_{n-1}^{+})^{\ast}d_{n-1}^{+}{s'}^{\ast}\\
&= {s'}^{+}a_0b_{n-1}^{+}d_{n-1}^{+}{s'}^{\ast}
 = {s'}^{+}a_0d_{n-1}^{+}{s'}^{\ast}
 = {s'}^{+}{s''}^{+}sd_{n-1}^{+}{s'}^{\ast}\\
&= {s''}^{+}{s'}^{+}s{s'}^{\ast}d_{n-1}^{+} = {s''}^{+}s'd_{n-1}^{+} = c_0d_{n-1}^{+}
 = c_n,
\end{align*}
we have \(c_n^{\ast} \leq a_n^{\ast}\).
Similarly, we have \(d_n^{+} \leq b_n^{+}\).


Since \(s''\stackrel{\varepsilon({s''}^{+}s,t{t''}^{\ast})}{\sharp} t''\neq 0\), we have 
\begin{equation}\label{eq2}
{s''}^{\ast}={t''}^{+}=\varepsilon({s''}^{+}s,t{t''}^{\ast}).
\end{equation}


Now, we prove, by induction on $n$, that 
\begin{equation}\label{eq3}
{s''}^{\ast} \leq c_{n}^{\ast},d_{n}^{+},
\end{equation}
for every $0\leq n$.
Since
\[
s'' c_0^{\ast} = {s''}^+s'{s''}^{\ast} ({s''}^{+}s')^{\ast} = {s''}^+s' ({s''}^{+}s')^{\ast}{s''}^{\ast}
 = {s''}^+s' {s''}^{\ast} = s'',
\]
we have \({s''}^{\ast} \leq c_0^{\ast}\). 
Similarly, we have \({s''}^{\ast} \leq d_0^{+}\).

Now, assume then that $n \geq 1$. 
By the assumption of the induction, we have \({s''}^{\ast} \leq c_{n-1}^{\ast}, d_{n-1}^{+}\). 
Since
\begin{align*}
s'' c_n^{\ast} 
&= s''(c_0d_{n-1}^{+})^{\ast} 
 = s''(c_0d_{n-1}^{+})^{\ast}d_{n-1}^{+} 
 = {s''}^+s'{s''}^{\ast}({s''}^{+}s'd_{n-1}^{+})^{\ast}d_{n-1}^{+}\\
&= {s''}^+s'd_{n-1}^{+}({s''}^{+}s'd_{n-1}^{+})^{\ast}{s''}^{\ast} 
 = {s''}^+s'd_{n-1}^{+}{s''}^{\ast} 
 = {s''}^+s'{s''}^{\ast}d_{n-1}^{+} 
 = s''d_{n-1}^{+}\\
&= s''s''^{\ast}d_{n-1}^{+} = s''s''^{\ast} = s'',
\end{align*}
we have \({s''}^{\ast} \leq c_{n}^{\ast}\). Similarly, we have \({s''}^{\ast} \leq d_{n}^{+}\).

By~(\ref{eq1}), (\ref{eq2}), (\ref{eq3}), there exists an integer \(m\) such that \(a_m=b_m=c_m=d_m={s''}^{\ast}\). The result follows.
\end{proof}

In Lemma~\ref{sPrimeAndsPrimePrime}, the converse may not hold that
\(s''\stackrel{\varepsilon({s''}^{+}s',t'{t''}^{\ast})}{\sharp} t''\neq 0\) and \(s''\stackrel{\varepsilon({s''}^{+}s,t{t''}^{\ast})}{\sharp} t''= 0\) (see Example~\ref{Exa10}).
Moreover, it may be the case 
\[{s''}^{\ast}={t''}^{+}=\varepsilon({s''}^{+}s',t'{t''}^{\ast})\] and \(s''\sharp t''= 0\), but \(\varepsilon({s''}^{+}s,t{t''}^{\ast}) \neq {s''}^{\ast}\) (see Example~\ref{Exa11}).

\begin{lem}\label{stEpsilon}
Let $s,t\in S$ and $e,f \in E(S)$ with \(t^{+} \leq e\) and \(s^{\ast} \leq f\).
We have \[\varepsilon(s,t) = \varepsilon(se,t) = \varepsilon(s,ft) = \varepsilon(se,ft).\]
\end{lem}

\begin{proof}
Let \(a_0 = s, b_0 =  t, c_0 = se\), and \(d_0 =  t\).
Since \[ed_n=ec_{n-1}^{\ast}t=c_{n-1}^{\ast}et=c_{n-1}^{\ast}et^{+}t=c_{n-1}^{\ast}t=d_n,\] we have $d^{+}_n \leq e$, for every $n \geq 0$. 
We show by induction on $n$ that $a_{2n+1}=c_{2n+1}$ and $b_{2n}=d_{2n}$, for every $n \geq 0$.
We have 
\(a_1 = st^{+}\)
and
\(c_1 = set^{+}\).
Since \(t^{+} \leq e\), we have \(a_1 = c_1\).
Assume then that $n > 0$, and that the result holds for smaller values of $2n+1$.
We have 
\(a_{2n+1} = sb_{2n}^{+}\)
and
\(c_{2n+1} = sed_{2n}^{+}\). Since $d^{+}_{2n} \leq e$ and \(b_{2n}^{+} = d_{2n}^{+}\), we have \(a_{2n+1} = c_{2n+1}\). Now, as \(a_{2n+1} = c_{2n+1}\), we have \(b_{2n+2} = a^{\ast}_{2n+1}t = c^{\ast}_{2n+1}t = d_{2n+2}\).

Therefore, we have \(\varepsilon(s,t) = \varepsilon(se,t)\). Similarly, we have \(\varepsilon(s,t) = \varepsilon(s,ft)\).

As \(s^{\ast} \leq f\), we get that \(sf=s\). 
Hence, we have \(sef=se\) and, thus, \((se)^{\ast} \leq f\). 
Now, as mentioned above, we have
\(\varepsilon(se,t) = \varepsilon(se,ft)\).

The result follows.
\end{proof}

By Lemma~\ref{stEpsilon}, the following corollary holds.

\begin{cor}\label{sPrimeAndsPrimePrime1}
Let \(s' \ll s\) and \(t' \ll t\) with \({s'}^{\ast} = {t'}^{+}\).
We have \[\varepsilon({s'}^{+}s,t') = \varepsilon(s',t{t'}^{\ast}) = {s'}^{\ast}.\]
\end{cor}

Let $s,t\in S$.
We have \[
s= \displaystyle\sum\limits_{s'\ll s}\mu_S(s', s)Z(s')\
\text{and}\ 
t= \displaystyle\sum\limits_{t'\ll t}\mu_S(t', t)Z(t').\]
Then, we get that
\begin{align*}
s\boldsymbol{*}t
&=
\displaystyle\sum\limits_{\substack{s'\ll s,\\ t'\ll t}}\mu_S(s', s)\mu_S(t', t)Z(s')\boldsymbol{*}Z(t')\\
&=
\displaystyle\sum\limits_{\substack{s'\ll s,\\ t'\ll t}}\mu_S(s', s)\mu_S(t', t)
\big(
\sum\limits_{\substack{s''\ll s',\\ t''\ll t'}}
(s''\stackrel{\varepsilon({s''}^{+}s',t'{t''}^{\ast})}{\sharp} t'')
\big)\\
&=
\sum\limits_{\substack{s''\ll s,\\ t''\ll t}}\big[\sum\limits_{\substack{s''\ll s'\ll s,\\ t''\ll t'\ll t,\\ 
s''\stackrel{\varepsilon({s''}^{+}s',t'{t''}^{\ast})}{\sharp} t''\neq 0}}
\big(
\mu_S(s', s)\mu_S(t', t)
\big)\big]
s''t''\\
&=
\sum\limits_{\substack{s''\ll s,\\ t''\ll t}}
\big[
\sum\limits_{t''\ll t'\ll t}
\big(
\sum\limits_{\substack{s''\ll s'\ll s,\\ s''\stackrel{\varepsilon({s''}^{+}s',t'{t''}^{\ast})}{\sharp} t''\neq 0}}
\mu_S(s', s)
\big)
\mu_S(t', t)
\big]
s''t''
.
\end{align*}
We define the function \(\xi\colon S^{(s)}\times S^{(t)}\rightarrow \mathbb{C}\), where \(S^{(s)}=\{s'' \in S \mid s'' \ll s\}\), for every \(s \in S\), as follows:
\begin{align*}
\xi(s'',t'') 
&= 
\sum\limits_{t''\ll t'\ll t}
\big(
\sum\limits_{\substack{s''\ll s'\ll s,\\ s''\stackrel{\varepsilon({s''}^{+}s',t'{t''}^{\ast})}{\sharp} t'' \neq 0}}
\mu_S(s', s)
\big)
\mu_S(t', t)\\
&=
\sum\limits_{s''\ll s'\ll s}
\big(
\sum\limits_{\substack{t''\ll t'\ll t,\\ s''\stackrel{\varepsilon({s''}^{+}s',t'{t''}^{\ast})}{\sharp} t'' \neq 0}}
\mu_S(t', t)
\big)
\mu_S(s', s),
\end{align*}
for every \(s'' \in S^{(s)}\) and \(t'' \in S^{(t)}\).

\begin{lem}\label{LemmaMain}
Let \(s'' \in S^{(s)}\) and \(t'' \in S^{(t)}\) with \(s'' \sharp t'' \neq 0\).
If \(s^{\ast} \neq t^{+}\)
then
\(\xi(s'',t'') = 0\). 
\end{lem}

\begin{proof}
Since \(s^{\ast} \neq t^{+}\), we have \(s^{\ast} \not\leq t^{+}\) or \(t^{+} \not\leq s^{\ast}\).
By symmetry, we may assume that \(s^{\ast} \not\leq t^{+}\).
Hence, we have \(st^{+} \neq s\).
Since \(s'' \sharp t'' \neq 0\), we get that \({s''}^{\ast} = {t''}^{+}\).
Now, as \({t''} \ll t\), we have \({s''}^{\ast} \leq t^{+}\) and, thus, we obtain \({s''} \ll st^{+} \ll s\).
Hence, we have \({s''} \neq s\).

Let \(t''\ll t'\ll t\). We prove that \(\big(
\sum\limits_{\substack{s''\ll s'\ll s,\\ s''\stackrel{\varepsilon({s''}^{+}s',t'{t''}^{\ast})}{\sharp} t'' \neq 0}}
\mu_S(s', s)
\big)
 = 0\), thereby completing the proof of the lemma.

If \(\varepsilon({s''}^{+}s',t'{t''}^{\ast}) = {s''}^{\ast}\), for all \(s''\ll s'\ll s\), and considering that \({s''} \neq s\), it follows that \(\sum\limits_{s''\ll s'\ll s}\mu_S(s', s) = 0\).

Next, suppose that there is an element \(s''\ll x\ll s\) such that \(\varepsilon({s''}^{+}x,t'{t''}^{\ast}) \neq {s''}^{\ast}\).
By Lemma~\ref{sPrimeAndsPrimePrime}, 
there exist pairwise distinct elements \(s''\ll s_1,\ldots, s_{n} \ll s\) 
such that
the following conditions hold:
\begin{enumerate}
\item \(\varepsilon({s''}^{+}s_i,t'{t''}^{\ast}) \neq {s''}^{\ast}\), for every \(1\leq i \leq n\);
\item if \(u \ll s_i\) with \(u \neq s_i\), then \(\varepsilon({s''}^{+}u,t'{t''}^{\ast}) = {s''}^{\ast}\), for all \(s''\ll u\ll s\);
\item if \(\varepsilon({s''}^{+}u,t'{t''}^{\ast}) \neq {s''}^{\ast}\), for some \(s''\ll u\ll s\), then there exists an integer \(i\) such that \(s_i \ll u\).
\end{enumerate}

Since \(t' \ll t\), we have \({t'}^{+} \leq t^{+}\). 
Then, by Lemma~\ref{stEpsilon}, we have 
\[
\varepsilon({s''}^{+}s_i,t'{t''}^{\ast}) 
= 
\varepsilon({s''}^{+}s_i,t^{+}t'{t''}^{\ast}) 
= 
\varepsilon({s''}^{+}s_it^{+},t'{t''}^{\ast}).\]
Now, as \(s_it^{+} \ll s_i\), applying the second condition mentioned above, we conclude that \(s_it^{+} = s_i\).
It follows that \(s_i \ll st^{+}\), for every \(1\leq i \leq n\).

Let \(X_i = \{s'' \ll x \ll s \mid s_i \ll x \}\), for every \(1\leq i \leq n\).
Consider integers \(i_1, \ldots, i_m\)
and elements \(x, y \in X_{i_1}\bigcap \cdots \bigcap X_{i_m} \).
Let \(u = x^{+}y^{+}sx^{\ast}y^{\ast}\). Since \(x,y \ll s\), we have \(u \ll x,y\).
Let \(1 \leq j \leq m\).
Since \(s_{i_j} \ll x, y\), we have \(s_{i_j}^{+} \leq x^{+}, y^{+}\) and \(s_{i_j}^{\ast} \leq x^{\ast}, y^{\ast}\).
Hence, we obtain that \(s_{i_j}^{+}x^{+}y^{+}sx^{\ast}y^{\ast}s_{i_j}^{\ast} = s_{i_j}^{+}ss_{i_j}^{\ast} = s_{i_j}\).
Then, we have \(s_{i_j} \ll u\).
It follows that the subset \(X_{i_1}\bigcap \cdots \bigcap X_{i_m}\) has a minimum element \(x_{i_1, \ldots,i_m}\) regarding the relation \(\ll\).
Now, as the distinct elements \(s\) and \(st^{+}\) are in the subset \(X_{i_1}\bigcap \cdots \bigcap X_{i_m}\),
we have 
\[
\sum\limits_{\substack{s''\ll x\ll s\\ x \in X_{i_1}\bigcap \cdots \bigcap X_{i_m}}}\mu_S(x, s) 
= 
\sum\limits_{x_{i_1, \ldots,i_m}\ll x\ll s}\mu_S(x, s)
=
0.
\]
Therefore, we conclude that
\(\big(\sum\limits_{\substack{s''\ll x\ll s\\ \varepsilon({s''}^{+}x,t'{t''}^{\ast}) \neq {s''}^{\ast}}}\mu_S(x, s)\big) = 0\)
and, thus, we deduce
\(\xi(s'',t'') = 0\).
\end{proof}

Now, considering Lemma~\ref{LemmaMain}, we proceed to the following theorem.

\begin{thm}\label{uu'}
Let $s,t\in S$.
If $s^{\ast}\neq {t}^{+}$, then we have $s\ast t=0$.
\end{thm}

Let $X_e=\{x_s\in X\mid s\in \widetilde{L}_e\widetilde{R}_e\}$.
Let $\theta_e(X_e)$ be the determinant of the submatrix $\widetilde{L}_e\times\widetilde{R}_e$ of the Cayley table $(Z(S),\boldsymbol{*})$, for every idempotent $e\in S$.

\begin{thm}\label{Main-Theorem}
Let $S\in \pv{ECOM}$. 
For $s \in S$, put $y_s=\sum\limits_{t\ll s}\mu_S(t, s)x_t$. 
Then, we have
\[
\theta_S(X)
= \pm\prod\limits_{e\in E(S)}\widetilde{\theta}_{e}(Y_e)
\]
where $Y_e=\{y_s\mid s \in \widetilde{L}_e\widetilde{R}_e\}$.
Moreover, the determinant of $S$ is non-zero if and only if $\widetilde{\theta}_{e}(Y_e)\neq 0$, for every idempotent $e$.
\end{thm}

\begin{proof}
Since $\mathbb{C}S\cong Z(\mathbb{C}S)$, by Theorem~\ref{Zisom}, we can take the set $$X=\{x_1,\ldots,x_{\abs{S}}\}$$ as the base of $Z(\mathbb{C}S)$. 
Let $M$ be a matrix that by rearranging and shifting the rows and columns of $C(X)$ so that the elements of the subset $\widetilde{L}_e$ being adjacent rows and the elements of the subset $\widetilde{R}_{e}$ being adjacent columns for every idempotent $e\in E(S)$. 
If $s\in \widetilde{L}_e$ and $t\in\widetilde{R}_{e'}$, for some distinct idempotents $e$ and $e'$, then we have $x_s\boldsymbol{*}x_t=0$.
Then, the determinant of the matrix $M$ is equal to $\prod\limits_{e\in E(S)}\widetilde{\theta}_{e}(X_e)$ and, thus, we have 
\[\theta_{(Z(\mathbb{C}S),X)}= \pm\prod\limits_{e\in E(S)}\widetilde{\theta}_{e}(X_e).\]

Let $P$ be the $S\times S$ matrix of $Z$ with respect to the bases of $\mathbb{C}S$ and $\mathbb{C}Z(S)$.
Since the relation $\ll$ is a partial order, the determinant of $P$ is equal to 1.
Now, we apply Theorem~\ref{ThetaCal}.
Since $Z$ is an isomorphism and, as discussed above, we have
\[
\theta_S(X)
= \widetilde{Z}^{-1}(\theta_{(\widetilde{Z}(\mathbb{C}S),X)})
= \widetilde{Z}^{-1}(\pm\prod\limits_{e\in E(S)}\widetilde{\theta}_{e}(X_e))
= \pm\prod\limits_{e\in E(S)}\widetilde{Z}^{-1}(\widetilde{\theta}_{e}(X_e)).
\]
By applying M\"{o}bius inversion, we have $Z(y_s)=x_s$. Therefore, we get that
\[
\theta_S(X)
= \pm\prod\limits_{e\in E(S)}\widetilde{\theta}_{e}(Y_e).
\]
\end{proof}

Note that in Theorem~\ref{Main-Theorem}, the sign of \(\prod\limits_{e\in E(S)}\widetilde{\theta}_{e}(Y_e)\) 
is contingent on whether the number of rearrangements and shifts applied to the rows and columns of $C(X)$ in order to construct the matrix \(M\)  is odd or even.
Example~\ref{Exa13} utilizes Theorem~\ref{Main-Theorem} to demonstrate that the determinant of the semigroup $S_9$ is non-zero.


\section{Appendix A}

In Appendix, we provide examples of the semigroups that the paper discusses.
We present detailed information for each semigroup, including its Cayley table and contracted semigroup determinants.
In the examples provided, we exclude the element zero from the rows and columns of the Cayley table.
If the multiplication of two non-zero elements in the Cayley table results in zero, we represent it with a dot.

\begin{example}\label{Exa1}
\ 
\begin{center}
\begin{tabular}{ c | c c c c } 
\textbf{$S_1$} & $\cc{y}$ & $\cc{z}$ & $\cc{u}$ & $\cc{t}$\\ \hline 
$\cc{y}$  & . & . & . & $y$ \\ 
$\cc{z}$  & . & . & $z$  & $z$ \\ 
$\cc{u}$  & $y$  & . & $u$  & $u$ \\ 
$\cc{t}$  & $y$  & $z$  & $u$  & $t$ \\ 
\end{tabular}\\
\end{center}
$\widetilde{\theta}_{S_1}(X_{S_1})=-x^2_yx^2_z$.\\
We have $y^{\ast}=t$ and $y^{+}=u$.
\end{example}

\begin{example}\label{Exa2}
\ 
\begin{center} 
\begin{tabular}{ c | c c c c c c c } 
\textbf{$S_2$} & $\cc{y}$ & $\cc{z}$ & $\cc{u}$ & $\cc{t}$ & $\cc{w}$ & $\cc{v}$ & $\cc{q}$\\ \hline 
$\cc{y}$  & . & . & . & . & . & $y$  & $y$ \\ 
$\cc{z}$  & . & . & . & . & . & . & $z$ \\ 
$\cc{u}$  & . & . & . & $y$  & $y$  & $y$  & $u$ \\ 
$\cc{t}$  & . & . & $z$  & . & $z$  & $t$  & $t$ \\ 
$\cc{w}$  & . & . & $z$  & $y$  & $y$  & $t$  & $w$ \\ 
$\cc{v}$  & $y$  & . & $u$  & $y$  & $u$  & $v$  & $v$ \\ 
$\cc{q}$  & $y$  & $z$  & $u$  & $t$  & $w$  & $v$  & $q$ \\ 
\end{tabular}
\end{center}
$\widetilde{\theta}_{S_2}(X_{S_2})=x^3_yx^4_z$.\\
We have $t \ll w$, $u \ll w$ and $z=tu \not\ll ww = y$.
\end{example}

\begin{example}\label{Exa3}
\ 
\begin{center} 
\begin{tabular}{ c | c c c c c c c c c} 
\textbf{$S_3$} & $\cc{a}$ & $\cc{b}$ & $\cc{s'}$ & $\cc{t'}$ & $\cc{s}$ & $\cc{t}$ & $\cc{e}$ & $\cc{f}$\\ \hline 
$\cc{a}$ & . & . & . & . & . & . & $a$  & $a$ \\ 
$\cc{b}$ & . & . & . & . & . & . & $b$  & $b$ \\ 
$\cc{s'}$ & . & . & . & $a$  & . & $a$  & $s'$  & $s'$ \\ 
$\cc{t'}$ & . & . & . & . & . & . & $t'$  & $t'$ \\ 
$\cc{s}$ & . & . & . & $a$  & . & $b$  & $s'$  & $s$ \\ 
$\cc{t}$ & . & . & . & . & . & . & $t$  & $t$ \\ 
$\cc{e}$ & $a$  & $b$  & $s'$  & $t'$  & $s$  & $t'$  & $e$  & $e$ \\ 
$\cc{f}$ & $a$  & $b$  & $s'$  & $t'$  & $s$  & $t$  & $e$  & $f$ \\ 
\end{tabular}
\end{center}
$\theta_{S_3}(X_{S_3})=0$.\\
We have $s'\ll s$, $t'\ll t$, $s't'=a$, $st=b$ and $a\not\ll b$. 
\end{example}

\begin{example}\label{Exa4}
In the semigroup $S_1$ (Example~\ref{Exa1}),
we have $I^{\ast}_t = \{0, z, u\}$ and $I^{\ast}_t$ is not an ideal.
\end{example}

\begin{example}\label{Exa5}
\ 
\begin{center} 
\begin{tabular}{ c | c c c c c c } 
\textbf{$S_4$} & $\cc{y}$ & $\cc{z}$ & $\cc{u}$ & $\cc{t}$ & $\cc{w}$ & $\cc{v}$\\ \hline 
$\cc{y}$  & . & . & . & . & . & $y$ \\ 
$\cc{z}$  & . & . & . & . & $z$  & .\\ 
$\cc{u}$  & . & . & . & $u$  & $u$  & $u$ \\ 
$\cc{t}$  & . & $z$  & . & $t$  & $t$  & $t$ \\ 
$\cc{w}$  & $y$  & $z$  & . & $t$  & $w$  & $t$ \\ 
$\cc{v}$  & . & $z$  & $u$  & $t$  & $t$  & $v$ \\ 
\end{tabular}
\end{center}
$\widetilde{\theta}_{S_4}(X_{S_4})=-x^2_yx^2_zx^2_u$.\\
We have $wS_4w = \{0, z, t, w\}$, $vS_4v = \{0, u, t, v\}$, $I^{\ast}_w = \{0, u, t\}$, $I^{+}_v = \{0, z, t\}$, $I^{\ast}_w\not\subset wS_4w$ and $I^{+}_v\not\subset vS_4v$.
\end{example}

\begin{example}\label{Exa8}
Let $S_5$ be a semigroup defined as follows.
\begin{align*}
S_5=\langle 0, s, t, e, f, g, h, 1 \mid\ & se=s, ft=t, e^2=e, f^2=f, g^2=g, h^2=h,\\
                                       \ & ef=fe, gh=hg=0, ge=eg=g, gf=fg=g,\\
                                       \ & he=eh=h, hf=fh=h, sgt=sht=st,\\
                                       \ &0\ \text{is the zero of}\ S_5\ \text{and}\ 1\ \text{is the identity of}\ S_5 \rangle,
\end{align*}
$\theta_{S_5}(X_{S_5})=0$.\\
We have $s^{\ast}=e$, $t^{+}=f$, $E^{st}=\{g,h,ef\}$ and $g,h$ are minimal in $E^{st}$. 
Also, we have 
\[(st)^{\ast}=t^{\ast}=(et)^{\ast}=(ht)^{\ast}=(gt)^{\ast}=1\] 
and 
\[(st)^{+}=s^{+}=(sf)^{+}=(sh)^{+}=(sg)^{+}=1.\]
\end{example}

\begin{example}\label{Exa9}
\ 
\begin{center} 
\begin{tabular}{ c | c c c c c c c } 
\textbf{$S_6$} & $\cc{y}$ & $\cc{z}$ & $\cc{u}$ & $\cc{t}$ & $\cc{w}$ & $\cc{v}$ & $\cc{q}$\\ \hline 
$\cc{y}$ & . & . & $z$  & $z$  & $z$  & $y$  & $y$ \\ 
$\cc{z}$ & . & . & . & . & . & $z$  & $z$ \\ 
$\cc{u}$ & $z$  & . & $y$  & $y$  & $y$  & $u$  & $u$ \\ 
$\cc{t}$ & $z$  & . & $y$  & $y$  & $y$  & $u$  & $t$ \\ 
$\cc{w}$ & $z$  & . & $y$  & $y$  & $y$  & $w$  & $w$ \\ 
$\cc{v}$ & $y$  & $z$  & $u$  & $t$  & $u$  & $v$  & $v$ \\ 
$\cc{q}$ & $y$  & $z$  & $u$  & $t$  & $w$  & $v$  & $q$ \\ 
\end{tabular}
\end{center}
$\theta_{S_6}(X_{S_6})=0$.\\
We have $(u \sharp t) \sharp w \neq u \sharp (t \sharp w)$, since $u \sharp t = 0$ and $ u \sharp (t \sharp w)=utw$.
\end{example}

\begin{example}\label{Exa10}
Let $S_7$ be the semigroup generated with the letters \[0, s,s_1,s_2,t,t_1,t_2,s_3,t_3,s_4,s_5,\] 
defined as follows
\begin{align*}
\langle&
xy = yx\ \text{and}\ x^2 = x,\ \text{for all}\ x,y\in \{s_1,s_2,t_1,t_2,s_3,t_3,s_4,s_5\},\\
\ &
s_3 < s_1, t_3 < t_2, s_4 < s_5 < s_2, s_5 < t_1, \\
\ &
s_1s=s, ss_2=s, t_1t=t, tt_2=t,
s_3ss_5=s_3s, s_5tt_3=tt_3,\\
\ &
\{s_2,s_4,s_5,t_1,t_2,t_3\}s,
s\{s_1,s_3,t_2,t_3\},
\{s_1,s_3,t_2,t_3\}t,
t\{s_1,s_2,s_3,s_4,s_5,t_1\}=0,\\
\ &
ss, ts, tt = 0, \ \text{and} \
0\ \text{is the zero of}\ S_7 \rangle.
\end{align*}
With the help of GAP~\cite{GAP4}, the semigroup \(S_7\) has been identified, and the following conditions have been verified.
\begin{align*}
\ &
s_1 = s^{+},
s_2 = s^{\ast}, 
t_1 = t^{+}, 
t_2 = t^{\ast},\\
\ & 
s_3 = {s'}^{+}, 
t_3 = {t'}^{\ast}, 
s_4 = {s'}^{\ast} = {t'}^{+},\ \text{where}\ s' = s_3ss_4\ \text{and}\ t' = s_4tt_3,\\
\ & 
s_5 = ({s'}^{+}s)^{\ast} = (t{t'}^{\ast})^{\ast}.
\end{align*}
Then, we have \(s' \ll s\) and \(t' \ll t\).
We deduce \({s'}\sharp {t'} \neq 0\), \(\varepsilon({s'}^{+}s,t{t'}^{\ast})=({s'}^{+}s)^{\ast}\), and \(\varepsilon(s',t') = {s'}^{\ast}\) 
with \(({s'}^{+}s)^{\ast} \neq {s'}^{\ast}\). 
\end{example}

\begin{example}\label{Exa11}
\ 
\begin{center} 
\begin{tabular}{ c | c c c c c c c } 
\textbf{\cc{$S_{10}$}} & $\cc{y}$ & $\cc{z}$ & $\cc{u}$ & $\cc{t}$ & $\cc{w}$ & $\cc{v}$ & $\cc{q}$\\ \hline 
$\cc{y}$  & . & . & . & . & . & $y$  & $y$ \\ 
$\cc{z}$  & . & . & . & . & $z$  & $z$  & $z$ \\ 
$\cc{u}$  & . & . & . & . & $z$  & $u$  & $u$ \\ 
$\cc{t}$  & . & . & . & . & $z$  & $u$  & $t$ \\ 
$\cc{w}$  & $y$  & . & $y$  & $y$  & $w$  & $w$  & $w$ \\ 
$\cc{v}$  & $y$  & $z$  & $u$  & $u$  & $w$  & $v$  & $v$ \\ 
$\cc{q}$  & $y$  & $z$  & $u$  & $t$  & $w$  & $v$  & $q$ \\ 
\end{tabular} 
\end{center}
$\theta_{S_8}(X_{S_8})=0$.\\
We have 
\(z \leq u\), \(y \leq t\)
and
\[\varepsilon(z^{+}z, yy^{\ast}) (=z^{\ast}= y^{+} = w)\neq \varepsilon(z^+u, ty^{\ast}) (=v).\]
Also, we have \(z \sharp y = 0\).
\end{example}

\begin{example}\label{Exa12}
\ 
\begin{center} 
\begin{tabular}{ c | c c c c c c c } 
\textbf{$S_9$} & $\cc{y}$ & $\cc{z}$ & $\cc{u}$ & $\cc{t}$ & $\cc{w}$ & $\cc{v}$ & $\cc{q}$\\ \hline 
$\cc{y}$  & . & . & . & . & . & $y$  & $y$ \\ 
$\cc{z}$  & . & . & . & . & . & . & $z$ \\ 
$\cc{u}$  & . & . & . & $y$  & $y$  & . & $u$ \\ 
$\cc{t}$  & . & . & $z$  & . & $z$  & $t$  & $t$ \\ 
$\cc{w}$  & . & . & $z$  & $y$  & $y$  & $t$  & $w$ \\ 
$\cc{v}$  & $y$  & . & $u$  & . & $u$  & $v$  & $v$ \\ 
$\cc{q}$  & $y$  & $z$  & $u$  & $t$  & $w$  & $v$  & $q$ \\ 
\end{tabular}
\end{center}
The relations \(t \ll w\) and \(u \ll w\) are irreducible, \(t^{\ast} = u^{+} (= v)\) and \(w^{\ast} = w^{+} (= q)\).
We have \(\varepsilon(t^{+}w,wu^{\ast}) (= q) \neq t^{\ast}\).
\end{example}

\begin{example}\label{Exa13}
\ 
Consider the semigroup $S_9$ in Example~\ref{Exa12}.\\
The table of the semigroup $Z(S_9)$ 
with the multiplication $\boldsymbol{*}$ is as follows:\\
\begin{center}
\begin{tabular}{ c | c c c c c c c }  
\textbf{$(Z(S_9),\boldsymbol{*})$} & $\cc{y}$ & $\cc{z}$ & $\cc{u}$ & $\cc{t}$ & $\cc{u+t+w}$ & $\cc{v}$ & $\cc{v+q}$\\ \hline 
$\cc{y}$ & . & . & . & . & . & $y$  & $y$ \\ 
$\cc{z}$ & . & . & . & . & . & . & $z$ \\ 
$\cc{u}$ & . & . & . & $y$  & $y$  & . & $u$ \\ 
$\cc{t}$ & . & . & $z$  & . & $z$  & $t$  & $t$ \\ 
$\cc{u+t+w}$ & . & . & $z$  & $y$  & $y$  & $t$  & $u+t+w$ \\ 
$\cc{v}$ & $y$  & . & $u$  & . & $u$  & $v$  & $v$ \\ 
$\cc{v+q}$ & $y$  & $z$  & $u$  & $t$  & $u+t+w$  & $v$  & $v+q$  
\end{tabular} 
\end{center}
By Theorem~\ref{Zisom}, the table on the left side can be used for $Z(S_9)$. 
\begin{center}
\begin{tabular}{ll } 
\begin{tabular}{ c | c c c c c c c } 
\textbf{$(Z(S_9),\boldsymbol{*})$} & $\cc{y}$ & $\cc{z}$ & $\cc{u}$ & $\cc{t}$ & $\cc{w}$ & $\cc{v}$ & $\cc{q}$\\ \hline 
$\cc{y}$ & .    & .    & .    & .    & .    & $y$  & .\\
$\cc{z}$ & .    & .    & .    & .    & .    & .    & $z$  \\ 
$\cc{u}$ & .    & .    & .    & $y$  & .    & .    & $u$  \\ 
$\cc{t}$ & .    & .    & $z$  & .    & .    & $t$  & .\\ 
$\cc{w}$ & .    & .    & .    & .    & $-z$ & .    & $w$  \\ 
$\cc{v}$ & $y$  & .    & $u$  & .    & .    & $v$  & .\\ 
$\cc{q}$ & .    & $z$  & .    & $t$  & $w$  & .    & $q$   
\end{tabular} &
\begin{tabular}{ c | c c c c c c c } 
\textbf{$M$} & $\cc{y}$ & $\cc{u}$ & $\cc{v}$ & $\cc{z}$ & $\cc{t}$ & $\cc{w}$ & $\cc{q}$\\ \hline 
$\cc{y}$ & . & . & $y$  & . & . & . & .\\ 
$\cc{t}$ & . & $z$  & $t$  & . & . & . & .\\ 
$\cc{v}$ & $y$  & $u$  & $v$  & . & . & . & .\\ 
$\cc{z}$ & . & . & . & . & . & . & $z$ \\ 
$\cc{u}$ & . & . & . & . & $y$  & . & $u$ \\ 
$\cc{w}$ & . & . & . & . & . & $-z$  & $w$ \\ 
$\cc{q}$ & . & . & . & $z$  & $t$  & $w$  & $q$ 
\end{tabular}
\end{tabular}
\end{center}
The table $M$ is on the right side that by rearranging and shifting the rows and columns of $Z(S_9)$ so that the elements of the subset $\widetilde{L}_e$ being adjacent rows and the elements of the subset $\widetilde{R}_{e}$ being adjacent columns for every idempotent $e\in E(S_9)$. 
Then, it is easy to compute that the determinant of the matrix $M$ is non-zero and equal to $-x_y^3x_z^4$. 
Consequently, the determinant of $\widetilde{\theta}_{S_9}(X_{S_9})$ is also non-zero, with a value of $x_y^3x_z^4$. 
\end{example}


\section*{Acknowledgments}
The author was partially supported by CMUP, which is financed by
national funds through FCT -- Fundação para a Ciência e a Tecnologia,
I.P., under the project UIDB/00144/2020. The author also
acknowledges FCT support through a contract based on the “Lei do
Emprego Científico” (DL 57/2016).

\bibliographystyle{plain}
\bibliography{ref-Det}

\end{document}